\newfont{\nset}{msbm10}
\begin{document}

\markboth{Z. Xie, Y. Wang, W. Xu, W. Zhu, W. Li \& Z. Zhang}
{Combinatorial properties for a class of simplicial complexes extended from pseudo-fractal scale-free web}

\title{COMBINATORIAL PROPERTIES FOR A CLASS OF SIMPLICIAL COMPLEXES EXTENDED FROM PSEUDO-FRACTAL SCALE-FREE WEB }

\author{ZIXUAN XIE\textsuperscript{1,2}, YUCHENG WANG\textsuperscript{1,3}, WANYUE XU\textsuperscript{1,3},  LIWANG ZHU\textsuperscript{1,3}, WEI LI\textsuperscript{4} and ZHONGZHI ZHANG\textsuperscript{1,3}}
\address{{\textsuperscript{1}Shanghai Key Laboratory of Intelligent Information
Processing, Fudan University, Shanghai 200433, China}\\
{\textsuperscript{2}School of Software, Fudan University, Shanghai 200433, China}\\
{\textsuperscript{3}School of Computer Science, Fudan University, Shanghai 200433, China}\\
{\textsuperscript{4}Academy for Engineering and Technology, Fudan University, Shanghai, 200433, China}\\
\email{\{20302010061,15307130038,xuwy,19210240147,fd\_liwei,zhangzz\}@fudan.edu.cn}}

\maketitle

\footnotetext[1]{Corresponding author: Wei Li and Zhongzhi Zhang.}

\begin{abstract}
Simplicial complexes are a popular tool used to model higher-order interactions between elements of complex social and biological systems. In this paper, we study some combinatorial aspects of a class of simplicial complexes created by a graph product, which is an extension of the pseudo-fractal scale-free web. We determine explicitly the independence number, the domination number, and the chromatic number. Moreover, we derive closed-form expressions for the number of acyclic orientations, the number of root-connected acyclic orientations, the number of spanning trees, as well as the number of perfect matchings for some particular cases.
\end{abstract}
\keywords{simplicial complex;   pseudo-fractal; graph product; combinatorial problem; domination number; independence number; chromatic number; acyclic orientations; perfect matching; spanning trees}

\newpage

\begin{multicols}{2}

\section{INTRODUCTION}

Complex networks have become a popular and powerful formalism for describing diverse types of real-world complex interactive systems in nature and society, whose nodes and edges represent, respectively, the elements and their interactions in real systems~\cite{Ne03}.  In a large majority of previous studies~\cite{Ba16}, the authors consider only pairwise interactions between elements in complex systems, overlooking other interactions such as higher-order ones among multiple elements. Some recent works~\cite{BeGlLe16,GrBaMiAl17,BeAbScJaKl18,SaCaDaLa18,LaPeBaLa19} demonstrate that many real-life systems involve not only dyadic interactions but also interactions among more than two elements at a time. Such multi-way interactions among elements are usually called higher-order interactions or simplicial interactions. For example, in a scientific collaboration network~\cite{PaPeVa17}, for a paper with more than two authors, the interactions among the authors are not pairwise but higher-order. Similar higher-order interactions are also ubiquitous in neuronal spiking activities~\cite{GiPaCuIt15,ReNoScect17}, proteins~\cite{WuOtBa03}, and other real-life systems.

Since the function and various dynamics of a complex system rely to a large extent on the way of interactions between its elements, it is expected that higher-order interactions have a substantial impact on collective dynamics of complex systems with simplicial structure. The last several years have seen some important progress about profound influences of higher-order interactions on different dynamical processes~\cite{BaCeLaetc21}, including percolation~\cite{BiZi18}, public goods game~\cite{AlBadeMoPeLa21}, synchronization~\cite{SkAr19,GaDiGa21}, and epidemic spreading~\cite{MaGoAr20}. For example, in comparison with pairwise interactions, three-way interactions can lead to many novel phenomena, such as Berezinkii-Kosterlitz-Thouless percolation transition~\cite{BiZi18}, abrupt desynchronization~\cite{SkAr19}, as well as abrupt phase transition of epidemic spreading~\cite{MaGoAr20}.

In order to describe the widespread higher-order interactions observed in various real-world complex systems, a lot of models have been proposed~\cite{BaCeLaetc21}, based on some popular mathematical tools, such as simplicial complexes~\cite{CoBi17,PeBa18,LaPeBaLa19,KoSeKh21}.   
\textcolor{red}{A simplex of dimension $d$, called $d$-simplex, represents a single high-order interaction among $d+1$ nodes~\cite{Ha02}, which can be described by a complete graph of $d+1$ nodes. For example, a $0$-simplex is a node, a $1$-simplex is a link, a $3$-simplex is a triangle, while a $4$-simplex is a tetrahedron. For a $d$-simplex $\alpha$, a $\delta$-dimensional face $\alpha'$ of $\alpha$ is a $\delta$-simplex with $0 \leqslant \delta <d$ formed by a subset of the nodes in $\alpha$, i.e., $\alpha' \subseteq \alpha$. For instance, the faces of a $4$-simplex include four nodes, six links, and four triangles. A simplicial complex is a collection of simplices, which is formed by simplices glued along their faces. A simplicial complex is called $d$-dimensional if its constituent simplices are those of dimension at most $d$. Thus, simplicial complexes describe higher-order interactions in a natural way. }

Most of existing models are stochastically, which makes it a challengeable task to exactly analyze their topological and dynamical properties. Very recently, leveraging the edge corona product of graphs~\cite{HaLa93,HaLa95}, a family of iteratively growing deterministic  network model was developed~\cite{WaYiXuZh21,ZhXuZhKaCh22} to describe higher-order interactions. They are called deterministic simplicial  networks, since they are consist of simplexes. This network family subsumes the \textcolor{red}{pseudo-fractal scale-free web~\cite{DoGoMe02} as a particular case, which has received considerable attention from the scientific community, including   fractals~\cite{ZhZh21,XiYu22,WaSlYuZh22}, physics~\cite{RoHaBe07,ZhRoZh07,ZhQiZhXiGu09,ZhLiWuZh10,PeAgZh15,DiBoBe22}, and cybernetics~\cite{YiZhPa20,XuWuZhZhKaCh22}}.   The deterministic construction allow to study exactly at least analytically relevant properties: They display the remarkable scale-free~\cite{BaAl99} and small-world~\cite{WaSt98} properties that are observed in most real-world networks~\cite{Ne03}, and all the eigenvalues and their multiplicities of their normalized Laplacian matrices can be exactly determined~\cite{WaYiXuZh21}.

Although some structural and algebraic properties for the deterministic simplicial networks have been studied, their combinatorial properties  are less explored or not well understood. In this paper, we present an in-depth study on several combinatorial problems for deterministic simplicial networks. \textcolor{red}{Our main contributions are as follows. We first provide an alternative construction of the networks, which shows that the networks are self-similar. We then determine explicitly the domination number, the independence number, as well as the chromatic matching number. Finally, we provide exact formulas for the number of acyclic orientations, the number of root-connected acyclic orientations, the number of spanning trees, and the number of perfect matchings for some special cases. Our exact formulae for the independence number, the domination number, and the number of spanning trees generalize the results~\cite{ShLiZh17,ShLiZh18,ZhLiWuZh10} previously obtained for the pseudo-fractal scale-free web.}

The main reasons for studying the above combinatorial problems lie in at least two aspects. The first one is their inherent theoretical interest~\cite{Yu13,HoKlLiLiPoWa15,GaHaK15,CoLeLi15}, because it is a theoretical challenge to solve these problems. For example, counting all perfect matchings in a graph is \#P-complete~\cite{Va79TCS,Va79SiamJComput}. In view of the hardness, Lov{\'a}sz~\cite{LoPl86} pointed out that it is of great interest to construct or find special graphs for which these combinatorial problems can be exactly solved. The deterministic simplicial networks are in such graph category.
The other justification lies in the relevance of the studied combinatorial problems to practical applications. For example, minimum dominating sets~\cite{NaAk12} and maximum matchings~\cite{LiSlBa11} can be applied to study structural controllability of networks~\cite{LiBa16}, while maximum independent set problem is closely related to graph data mining~\cite{LiLuYaXiWe15,ChLiZh17}. Thus, our work provides useful insight into understanding higher-order structures in the application scenarios of these combinatorial problems.

\section{NETWORK CONSTRUCTIONS AND PROPERTIES}

The family of simplicial networks under consideration was proposed in~\cite{WaYiXuZh21}, which   is constructed based on the edge corona product of graphs~\cite{HaLa93,HaLa95}.  Let $\mathcal{G}_{1}$ and $\mathcal{G}_{2}$ be two graphs with disjoint node sets, where  $\mathcal{G}_{1}$ have $n_1$ nodes and $m_1$ edges. The  edge corona product $\mathcal{G}_{1}  \circledcirc \mathcal{G}_{2}$ of  $\mathcal{G}_{1}$ and $\mathcal{G}_{2}$  is a  graph obtained by taking one replica of $\mathcal{G}_{1}$ and $m_1$ replicas of $\mathcal{G}_{2}$, and
connecting both end nodes of the $i$-th edge of  $\mathcal{G}_{1}$ to every node in the $i$-th replica
of  $\mathcal{G}_{2}$ for $i=1,2,\ldots, m_1$. Let $\mathcal{K}_q$ ($q\geqslant1$) denote the $q$-node complete graph, with $\mathcal{K}_1$ being a graph with an isolate node.  Let $\mathcal{G}_{q}(g)=(\mathcal{V}(\mathcal{G}_{q}(g)),\mathcal{E}(\mathcal{G}_{q}(g)))$  denote the studied networks after $g$ iterations, where $\mathcal{V}(\mathcal{G}_{q}(g))$ and $\mathcal{E}(\mathcal{G}_{q}(g))$ are the sets of nodes and edges, respectively. Then, $\mathcal{G}_q(g)$ is constructed as follows, controlled by two parameters $q$ and $g$ with $q\geqslant1$ and $g \geqslant 0$.

\begin{definition}\label{defa}
For $g=0$, $\mathcal{G}_q(0)$ is the complete graph $\mathcal{K}_{q+2}$. For $g\geqslant 0$, $\mathcal{G}_q(g+1)$ is obtained from $\mathcal{G}_q(g)$  by performing the following operation: for every
existing edge of $\mathcal{G}_q(g)$, one creates a copy of the
complete graph $\mathcal{K}_q$  and connects all its $q$ nodes to both
end nodes of the edge. That is,  $\mathcal{G}_{q}(g+1)= \mathcal{G}_{q}(g) \circledcirc \mathcal{K}_{q}$.
\end{definition}

Figure~\ref{build} illustrates the operation obtaining $\mathcal{G}_q(g+1)$  from $\mathcal{G}_q(g)$, while  Fig.~\ref{net-ex}  illustrates the  network construction processes  for  two cases of $q =1$ and $q =2$.

\begin{figurehere}
\centerline{
\includegraphics[width=0.9\linewidth]{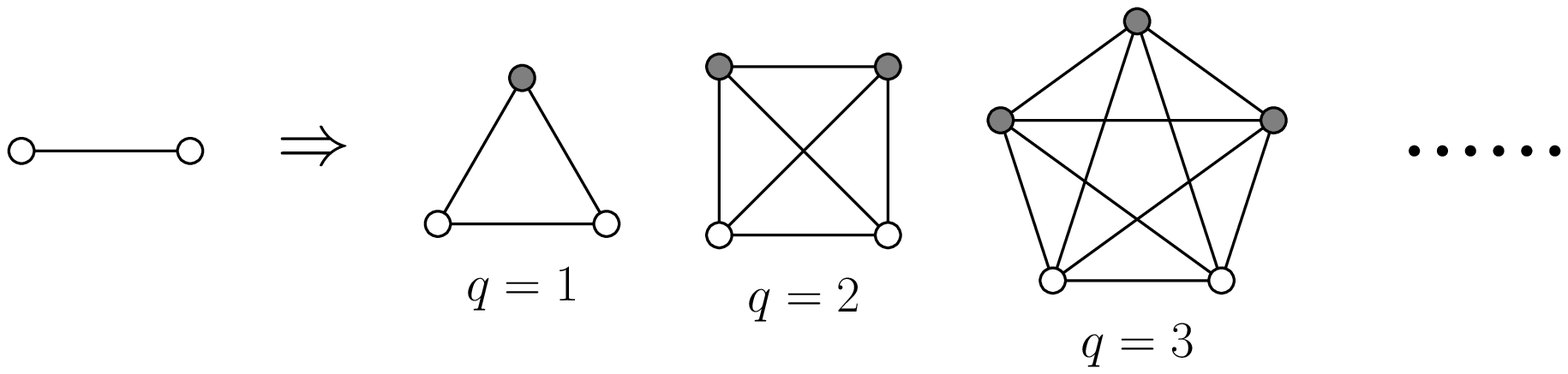}
}
\caption{Network construction approach. \textcolor{red}{For each existing edge in network $\mathcal{G}_q(g)$, performing the operation on  the right-hand side of the arrow generates network $\mathcal{G}_q(g+1)$. The filled circles stand for the nodes constructing the complete graph $\mathcal{K}_q$, and all the filled circles which appeared in step $g+1$ link to both end open nodes of the edge that already exist in the previous step $g$.}}
	\label{build}
\end{figurehere}

\begin{figurehere}
\centerline{
\includegraphics[width=.9\linewidth]{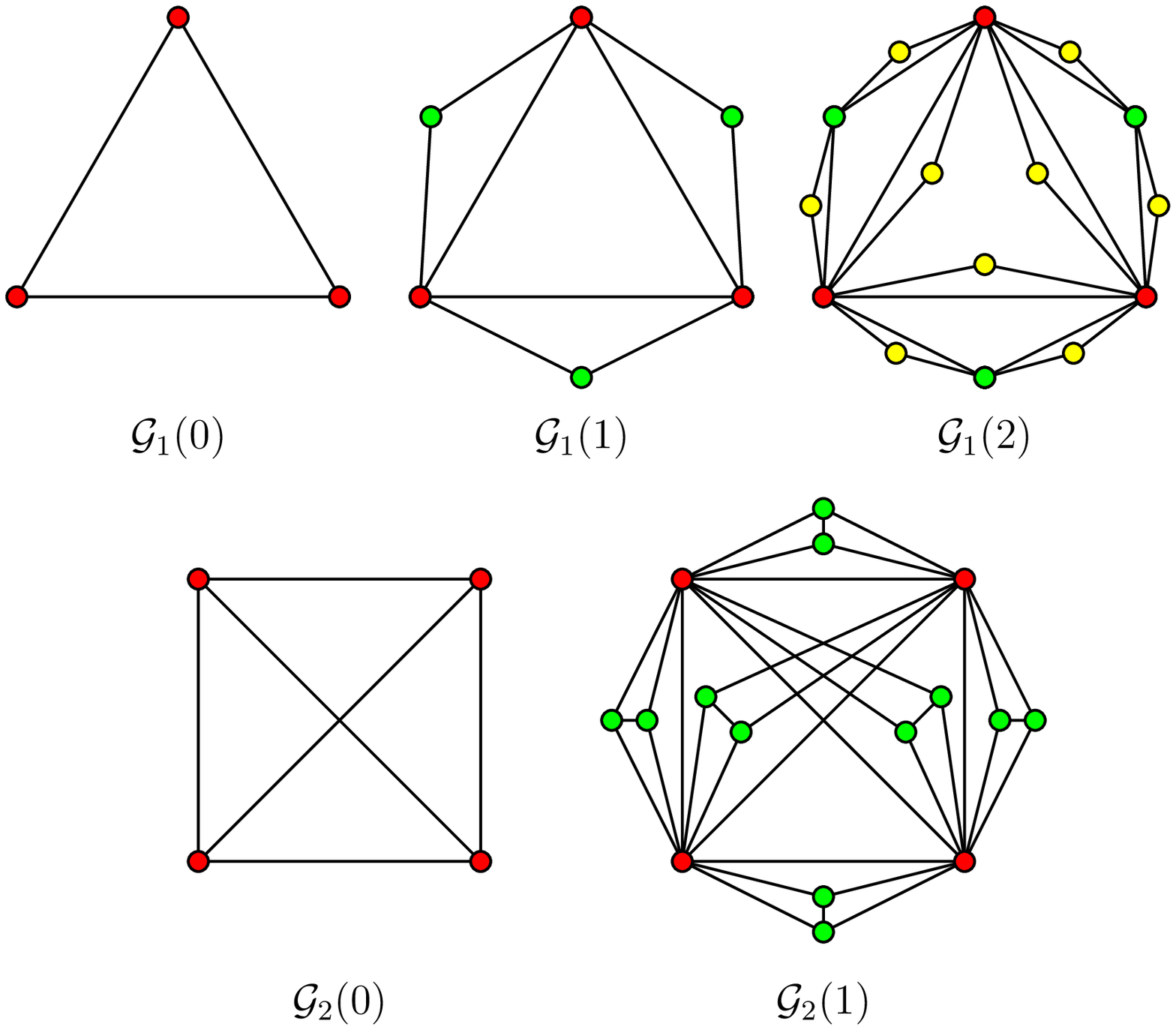}
}
\caption{The first several iterations of $\mathcal{G}_{q}(g)$ for  $q=1$ and $2$. The nodes generated at different iterations are marked with different colors.}
\label{net-ex}
\end{figurehere}

Let $N_{g,q}=|\mathcal{V}(\mathcal{G}_{q}(g))|$ and $M_{g,q}=|\mathcal{E}(\mathcal{G}_{q}(g))|$ denote, respectively, the  number of nodes and the number of edges in $\mathcal{G}_{q}(g)$. By construction, one obtains the following recursion relations for $M_{g,q}$ and $N_{g,q}$:
\begin{equation}
M_{g+1,q}=\frac{(q+1)(q+2)}{2}M_{g,q}
\end{equation}
and
\begin{equation}
N_{g+1,q}=q\,M_{g,q}+N_{g,q},
\end{equation}
which, together with $N_{0,q}=q+2$ and $M_{0,q}=(q+1)(q+2)/2$,   lead to
\begin{equation}\label{eqm}
M_{g,q}={\left[\frac{(q+1)(q+2)}{2}\right]}^{g+1}
\end{equation}
and
\begin{equation}\label{eqn}
N_{g,q}=\frac{2}{q+3}{\left[\frac{(q+1)(q+2)}{2}\right]}^{g+1}+\frac{2(q+2)}{q+3}.
\end{equation}
Thus, the average  degree of nodes in  graph $\mathcal{G}_q(g)$ is $2M_{g,q}/N_{g,q}$, which tends to $q + 3$ when $g$ is large, implying that $\mathcal{G}_q(g)$ is sparse.

For graph $\mathcal{G}_{q}(g)$,  let $\mathcal{W}_{q}(g)=\mathcal{V}(\mathcal{G}_{q}(g))\backslash \mathcal{V}(\mathcal{G}_{q}(g-1))$ represent the set of new nodes generated at iteration $g$. Then,
\begin{equation}\label{W}
	|\mathcal{W}_{q}(g)|=q\left[\frac{(q+1)(q+2)}{2}\right]^{g}.
\end{equation}
Let $d^{(g,q)}_v$ denote the degree of  node $v$ in graph $\mathcal{G}_q(g)$,  which was created at iteration $g_v$. Then,
$d^{(g,q)}_v=(q+1)^{g-g_v+1}$. It is easy to verify that in graph $\mathcal{G}_{q}(g)$, there are $q+2$  nodes with  degree $(q+1)^{g+1}$ and $q\left[\frac{(q+1)(q+2)}{2}\right]^{t_v}$ nodes with degree  $(q+1)^{g-g_v+1}$ for  $0< t_v< g$.

In  graph $\mathcal{G}_q(g)$,  the $q+2$  nodes with the highest degree $(q+1)^{g+1}$ are called hub nodes, which are generated at the initial interaction $g=0$. Let  $h_{k}(g)$, $k=1,2,\ldots, q+2$,  denote the $q+2$ hub nodes of graph $\mathcal{G}_q(g)$, and let $\mathcal{V}_{\rm h}(\mathcal{G}_q(g))$ denote the set of these hub nodes, that is, $\mathcal{V}_{\rm h}(\mathcal{G}_q(g))=\{h_1(g), h_2(g),\ldots, h_{q+2}(g)\}$.  Then, the simplicial networks can be generated in  an alternative way, highlighting the self-similarity.
\begin{proposition}\label{defb}
Given graph $\mathcal{G}_{q}(g)$, graph $\mathcal{G}_{q}(g+1)$ can be obtained by joining $\frac{(q+1)(q+2)}{2}$ copies of $\mathcal{G}_{q}(g)$, denoted as $\mathcal{G}_{q}^{(i,j)}(g)$ $(1\leq i<j\leq q+2)$, the $k$-th $(k=1,2,\ldots,q+2)$ hub node of which is denoted by   $h_{k}^{(i,j)}(g)$. Concretely, in the merging process, for each $k=1,2,\ldots,q+2$, the $q+1$ hub nodes $h_{k}^{(1,k)}(g)$, $h_{k}^{(2,k)}(g)$, $\ldots$, $h_{k}^{(k-1,k)}(g)$, $h_{k}^{(k,k+1)}(g)$, $\ldots$, $h_{k}^{(k,q+2)}(g)$ in the corresponding replicas of \textcolor{red}{$\mathcal{G}_{q}(g)$}  are identified as the hub node $h_{k}(g+1)$ of $\mathcal{G}_{q}(g+1)$.
\end{proposition}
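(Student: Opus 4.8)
The plan is to prove Proposition~\ref{defb} by induction on $g$, showing that the merging procedure described here produces exactly the graph $\mathcal{G}_q(g+1)$ obtained from the edge corona product in Definition~\ref{defa}. The whole argument rests on one elementary feature of the edge corona product $\circledcirc\,\mathcal{K}_q$: it acts \emph{locally and edge by edge}. For every edge of the host graph it introduces a private clique $\mathcal{K}_q$ joined to the two endpoints of that edge, and it neither deletes edges nor alters nodes that are not endpoints of the processed edge. In particular the $q+2$ hub nodes, which form a clique $\mathcal{K}_{q+2}$ at $g=0$, persist as the same nodes (and keep forming a clique) in every $\mathcal{G}_q(g)$, so the labels $h_k(g)$ and $h_k(g+1)$ refer to one and the same vertex.

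First I would dispose of the base case $g=0$, where $\mathcal{G}_q(0)=\mathcal{K}_{q+2}$ and every node is a hub. For each pair $(i,j)$ with $1\leq i<j\leq q+2$, the copy $\mathcal{G}_q^{(i,j)}(0)$ is a $\mathcal{K}_{q+2}$, and the merging rule glues only its $i$-th and $j$-th hubs to the global hubs $h_i(1)$ and $h_j(1)$, leaving the remaining $q$ hubs as internal nodes private to that copy. Hence each copy contributes, besides the shared edge $\{h_i(1),h_j(1)\}$, a clique $\mathcal{K}_q$ of $q$ fresh nodes joined to both $h_i(1)$ and $h_j(1)$; this is exactly the block that $\mathcal{K}_{q+2}\circledcirc\mathcal{K}_q=\mathcal{G}_q(1)$ attaches to the edge $(i,j)$ of the hub clique. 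Pairing the $\binom{q+2}{2}$ copies with the $\binom{q+2}{2}$ edges of the hub $\mathcal{K}_{q+2}$ then yields the desired isomorphism.

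For the inductive step I would assume the statement at level $g$, so that $\mathcal{G}_q(g)$ is the merge of $\binom{q+2}{2}$ copies of $\mathcal{G}_q(g-1)$ glued at hubs in the prescribed way, and then analyse $\mathcal{G}_q(g+1)=\mathcal{G}_q(g)\circledcirc\mathcal{K}_q$. The key claim is that the edge set of $\mathcal{G}_q(g)$ is \emph{partitioned} by the copies, i.e. each edge belongs to exactly one copy: any edge with an internal endpoint sits in a single copy, and the only edges joining two global hubs, $\{h_i(g),h_j(g)\}$, come solely from the copy indexed by $(i,j)$, because a copy $(a,b)$ is glued to both $h_i(g)$ and $h_j(g)$ only when $\{a,b\}=\{i,j\}$. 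Since the corona acts independently on disjoint edges, performing it on $\mathcal{G}_q(g)$ amounts to performing it inside each copy separately, which turns $\mathcal{G}_q^{(i,j)}(g-1)$ into $\mathcal{G}_q^{(i,j)}(g-1)\circledcirc\mathcal{K}_q=\mathcal{G}_q^{(i,j)}(g)$ while leaving the hub identifications untouched (the new nodes attach only to edges, and the corona preserves the hubs of each copy). By the induction hypothesis these identifications are precisely the prescribed ones -- copy $(i,j)$ has its $i$-th and $j$-th hubs glued to the global hubs -- so $\mathcal{G}_q(g+1)$ is indeed the merge of $\binom{q+2}{2}$ copies of $\mathcal{G}_q(g)$, completing the induction.

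I expect the main obstacle to be the careful bookkeeping behind the partition claim: verifying that no hub--hub edge is double counted and that the gluing creates no unintended adjacencies or accidental node coincidences between distinct copies. A convenient independent check, which also guards against an off-by-one in the identification pattern, is to confirm the node and edge totals. The $\binom{q+2}{2}$ copies share only the $q+2$ global hubs, each being the common image of $q+1$ copy-hubs; this gives $\binom{q+2}{2}N_{g,q}-q(q+2)$ nodes and, since the edges are partitioned, $\binom{q+2}{2}M_{g,q}$ edges, and using \eqref{eqm} and \eqref{eqn} both expressions collapse to $N_{g+1,q}$ and $M_{g+1,q}$ in agreement with the recursions.
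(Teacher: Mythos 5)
Your proof is correct and follows essentially the same route as the paper's: induction on $g$, with the key step being that the edge corona product $\circledcirc\,\mathcal{K}_q$ commutes with the hub-identification merge because the corona acts edge-locally and the copies partition the edge set, together with the observation that the corona preserves hub nodes. The paper simply asserts this distributivity inside its chain of merge identities (and calls the base case trivial), so your explicit edge-partition argument and worked base case supply the justification the paper leaves implicit.
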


\begin{proof}
We prove this proposition by induction on $g$. For $g=0$, the proof is trivial.
For $g>0$, assume that the conclusion holds for $\mathcal{G}_{q}(g)$, i.e: $\mathcal{G}_{q}(g)$ can be obtained as joining $\frac{(q+1)(q+2)}{2}$ copies of  $\mathcal{G}_{q}(g-1)$, which are denoted by $\mathcal{G}_q^{(i,j)}(g-1)$ with $1\leq i<j\leq q+2$.  During the amalgamation process, for each $k\in \{1,2,\ldots,q+2$\}, the $q+1$ hub nodes $h_{k}^{(1,k)}(g-1)$, $h_{k}^{(2,k)}(g-1)$, $\ldots$, $h_{k}^{(k-1,k)}(g-1)$, $h_{k}^{(k,k+1)}(g-1)$, $\ldots$, $h_{k}^{(k,q+2)}(g-1)$ in the corresponding $q+1$ replicas of $\mathcal{G}_{q}(g-1)$  are identified as the hub node $h_{k}(g)$ of $\mathcal{G}_{q}(g)$. For convenient description, we  use
\begin{align}
\mathcal{G}_{q}(g)=&\textbf{J}\big[\mathcal{G}_{q}^{(1,2)}(g-1),\mathcal{G}_{q}^{(1,3)}(g-1),\ldots,\notag \\
&\mathcal{G}_{q}^{(1,q+2)}(g-1),\ldots,\mathcal{G}_{q}^{(q+1,q-1)}(g-1),\notag\\
&\mathcal{G}_{q}^{(q+1,q+2)}(g-1);\mathcal{V}_{\rm h}(\mathcal{G}_{q}(g))\big],\notag
\end{align}
to denote the above process merging  $\mathcal{G}_q^{(i,j)}(g-1)$ ($1\leq i<j\leq q+2$) to $\mathcal{G}_{q}(g)$, where $\mathcal{V}_{\rm h}(\mathcal{G}_{q}(g)))$ is the set of the $q+2$ hub nodes in $\mathcal{G}_{q}(g)$, which are identified from the hub nodes in the $\frac{(q+1)(q+2)}{2}$ copies  $\mathcal{G}_q^{(i,j)}(g-1)$ of  $\mathcal{G}_{q}(g-1)$.

Next we will prove that the conclusion holds for $\mathcal{G}_{q}(g+1)$. Note that for any graph $\mathcal{G}$ with the degree of its hub nodes larger than 1, $\mathcal{V}_{\rm h}(\mathcal{G}\circledcirc\mathcal{K}_q)=\mathcal{V}_{\rm h}(\mathcal{G})$. By Definition~\ref{defa},  $\mathcal{G}_{q}(g+1)=\mathcal{G}_{q}(g)\circledcirc\mathcal{K}_{q}$.
Thus, using the definition of edge corona product and inductive hypothesis, we have
\begin{align}
&\quad \mathcal{G}_{q}(g+1)=\mathcal{G}_{q}(g)\circledcirc\mathcal{K}_{q}\notag\\
&=\textbf{J}\big[\mathcal{G}_{q}^{(1,2)}(g-1),\mathcal{G}_{q}^{(1,3)}(g-1),\ldots,\notag\\ &\quad \mathcal{G}_{q}^{(q+1,q+2)}(g-1);\mathcal{V}_{\rm h}(\mathcal{G}_{q}(g))\big]\circledcirc\mathcal{K}_{q}\notag\\
&=\textbf{J}\big[\mathcal{G}_{q}^{(1,2)}(g-1)\circledcirc\mathcal{K}_{q},\mathcal{G}_{q}^{(1,3)}(g-1)\circledcirc\mathcal{K}_{q},\ldots,\notag \\ &\quad \mathcal{G}_{q}^{(q+1,q+2)}(g-1)\circledcirc\mathcal{K}_{q};\mathcal{V}_{\rm h}(\mathcal{G}_{q}(g))\big]\notag\\
&=\textbf{J}\big[\mathcal{G}_{q}^{(1,2)}(g),\mathcal{G}_{q}^{(1,3)}(g),\ldots,\mathcal{G}_{q}^{(q+1,q+2)}(g);\notag\\ &\quad \mathcal{V}_{\rm h}(\mathcal{G}_{q}(g))\big]\notag\\
&=\textbf{J}\big[\mathcal{G}_{q}^{(1,2)}(g),\mathcal{G}_{q}^{(1,3)}(g),\ldots,\mathcal{G}_{q}^{(q+1,q+2)}(g);\notag\\ &\quad \mathcal{V}_{\rm h}(\mathcal{G}_{q}(g+1))\big].\notag
\label{reductJ}
\end{align}
This finishes the proof.
\end{proof}

Figure~\ref{net-st} illustrates the second construction way of graph $\mathcal{G}_{q}(g+1)$ for $q=1$ and  $q=2$.


\begin{figure*}
\centerline{
\includegraphics[width=.7\linewidth]{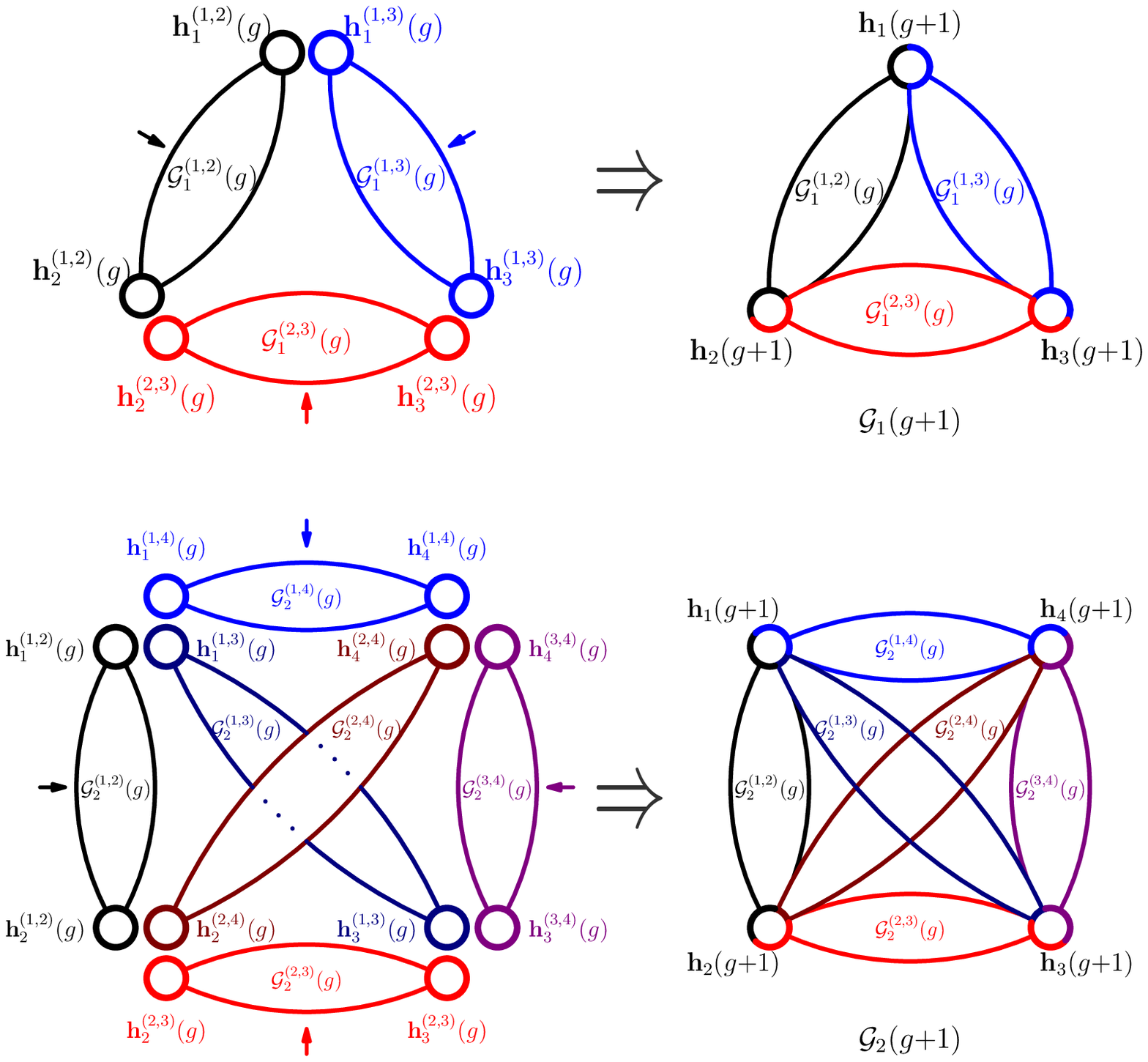}
}
\caption{Second construction means for the simiplical  networks for two special cases of  $q=1$ and   $q=2$. \textcolor{red}{For each $k\in \{1,2,\ldots,q+2$\}, the $q+1$ hub nodes $h_{k}^{(1,k)}(g-1)$, $h_{k}^{(2,k)}(g-1)$, $\ldots$, $h_{k}^{(k-1,k)}(g-1)$, $h_{k}^{(k,k+1)}(g-1)$, $\ldots$, $h_{k}^{(k,q+2)}(g-1)$ in the corresponding $q+1$ replicas of $\mathcal{G}_{q}(g-1)$ are identified as the hub node $h_{k}(g)$ of $\mathcal{G}_{q}(g)$.} }
\label{net-st}
\end{figure*}

The simplicial networks display some remarkable properties~\cite{WaYiXuZh21} as observed in most real networks~\cite{Ne03}. They are scale-free, since their node degrees obey a power-law distribution $P(d)\sim d^{-\gamma_q}$ with $\gamma_q=2 +\frac{\ln (q+2)}{\ln (q+1)}-\frac{\ln 2}{\ln (q+1)}$. They are  small-world, since their diameters grow logarithmically with the number of nodes   and their mean clustering coefficients approach a large constant $\frac{q^2+3q+3 }{q^2+3q+5}$. Moreover, they have a finite spectral dimension $\frac{2[\ln(q^2+3q+3)-\ln 2]}{\ln (q+1)}$.

After introducing the two construction methods  of the simplicial networks  $\mathcal{G}_{q}(g)$ and their relevant properties, in the sequel, we will study analytically  some combinatorial properties of the networks.


\section{INDEPENDENCE NUMBER}

For a simple connected graph  $\mathcal{G}=(\mathcal{V}(\mathcal{G}),\mathcal{E}(\mathcal{G}))$,   abbreviated as $\mathcal{G}=(\mathcal{V},\mathcal{E})$, an independent set of $\mathcal{G}$ is a proper subset $\mathcal{I}$ of $\mathcal{V}$ satisfying that each pair of nodes in $\mathcal{I}$ is not adjacent. An independent set is called a maximal independent set if it is not a subset of any other independent set. A maximal independent set is called a maximum independent set if it has the largest possible cardinality or size. The cardinality of any maximum independent set for a graph $\mathcal{G}$ is called the independence number of $\mathcal{G}$ and is denoted by $\alpha(\mathcal{G})$. We now study the independence number of graph $\mathcal{G}_q(g)$, denoted by $\alpha_q(g)$.
\begin{theorem}
For all $g\geq0$, the independence number of $\mathcal{G}_q(g)$ is:
\begin{equation}\label{alphag}
\alpha_q(g)={\left[\frac{(q+1)(q+2)}{2}\right]}^g.
\end{equation}
\end{theorem}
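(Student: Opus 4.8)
The plan is to prove matching lower and upper bounds on $\alpha_q(g)$, both equal to the edge count $M_{g-1,q}$ of the previous generation; since equation~\eqref{eqm} gives $M_{g-1,q}=\left[\frac{(q+1)(q+2)}{2}\right]^{g}$, this is exactly the claimed value. Both bounds will come from a single structural feature of Definition~\ref{defa}: at iteration $g$ each edge $e=(u,v)$ of $\mathcal{G}_q(g-1)$ spawns a fresh copy $\mathcal{K}_q^{(e)}$ of $\mathcal{K}_q$ whose $q$ nodes, together with $u$ and $v$, form a $(q+2)$-clique $C_e$. There are exactly $M_{g-1,q}$ such cliques.

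For the lower bound I would exhibit an explicit independent set. For $g=0$ the graph is $\mathcal{K}_{q+2}$ and $\alpha_q(0)=1$, matching the formula. For $g\geq 1$, pick one node from the fresh copy $\mathcal{K}_q^{(e)}$ inside each clique $C_e$. The only neighbours of such a freshly created node are its $q-1$ siblings in the same copy and the two endpoints $u,v$ of $e$; hence two chosen nodes coming from different copies are never adjacent. This yields an independent set with one vertex per copy, of size $M_{g-1,q}$, so $\alpha_q(g)\geq M_{g-1,q}$.

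For the upper bound I would use the family $\{C_e : e\in\mathcal{E}(\mathcal{G}_q(g-1))\}$ as a vertex clique cover. Every new node created at iteration $g$ lies in exactly one $C_e$, and every old node $w\in\mathcal{V}(\mathcal{G}_q(g-1))$ is an endpoint of at least one edge (the graphs have no isolated vertices for $q\geq1$), hence lies in some $C_e$; therefore these $M_{g-1,q}$ cliques cover all of $\mathcal{V}(\mathcal{G}_q(g))$. Since any independent set meets each clique in at most one vertex, summing the intersections over the cover gives $\alpha_q(g)\leq M_{g-1,q}$. Combining the two bounds finishes the proof.

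I expect the upper bound to be the only real obstacle: the lower bound is an immediate consequence of the locality of the new nodes, whereas the upper bound requires the observation that the newly formed $(q+2)$-cliques already cover the inherited vertices as well, so that no separate cliques are needed for the old part of the graph. An alternative, slightly heavier route is induction through the self-similar decomposition of Proposition~\ref{defb}: the lower bound follows by taking, in each of the $\frac{(q+1)(q+2)}{2}$ glued copies of $\mathcal{G}_q(g)$, a maximum independent set that avoids the hub nodes (the only vertices where copies are identified), and the upper bound follows because every vertex lies in some copy, so that $|\mathcal{I}|\leq\frac{(q+1)(q+2)}{2}\,\alpha_q(g)$ for any independent set $\mathcal{I}$. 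This works too, but it needs the extra bookkeeping that a hub-free maximum independent set exists, which is why I prefer the direct clique-cover argument.
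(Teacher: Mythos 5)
Your proposal is correct and takes essentially the same route as the paper: the paper also works with the $M_{g-1,q}$ cliques $\mathcal{K}_{q+2}^{(i)}$ spawned by the edges of $\mathcal{G}_q(g-1)$, bounds any independent set by one vertex per clique for the upper bound, and selects one freshly created node per clique for the lower bound. The only difference is that you make explicit the fact that these cliques also cover the inherited vertices of $\mathcal{G}_q(g-1)$ (since no vertex is isolated), a point the paper uses implicitly.
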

\begin{proof}
For $g=0$, $\mathcal{G}_q(0)$ is a complete graph of $q+2$ nodes. It is obvious that $\alpha_q(0)=1$, which is consistent with  Eq.~\eqref{alphag}.

For $g\geq1$, by Definition~\ref{defa}, $\mathcal{G}_q(g)$ is obtained from $\mathcal{G}_q(g-1)$ through replacing each of  $M_{g-1,q}$ edges in $\mathcal{G}_q(g-1)$  by a $(q+2)$-node complete graph, which includes the edge and its two end nodes. Let $\mathcal{K}_{q+2}^{(1)}$, $\mathcal{K}_{q+2}^{(2)}$, $\ldots$, $\mathcal{K}_{q+2}^{(M_{g-1,q})}$ denote the $M_{g-1,q}$ complete graphs,  respectively,  corresponding to the $M_{g-1,q}$ edges in   graph  $\mathcal{G}_q(g-1)$.
Then, for any  independent set $\mathcal{I}$ of $\mathcal{G}_q(g)$, there is at most one node in $\mathcal{K}_{q+2}^{(i)}$, $i=1,2,\ldots,M_{g-1,q}$. In other words,
$\left|\mathcal{I}\cap\mathcal{K}_{q+2}^{(i)}\right|\leq1$ for $i=1,2,\ldots,M_{g-1,q}$.
Therefore,  $|\mathcal{I}|\leq M_{g-1,q}={\left[\frac{(q+1)(q+2)}{2}\right]}^g$, implying  $\alpha_q(g)\leq{\left[\frac{(q+1)(q+2)}{2}\right]}^g$.

On the other hand, for every node $u$ in $\mathcal{G}_q(g)$, which is created at generation $g$,  it belongs to a certain clique $\mathcal{K}_{q+2}^{(i)}$ (namely, $u\in\mathcal{K}_{q+2}^{(i)}$), $1\leq i\leq M_{g-1,q}$, and is only connected to the other $q+1$ nodes in this $(q+2)$-clique. Therefore, by arbitrarily selecting one newly created node from each $\mathcal{K}_{q+2}^{(i)}$, $i=1,2,\ldots,M_{g-1,q}$, one obtains an independent set for $\mathcal{G}_q(g)$ with size equal to $M_{g-1,q}={\left[\frac{(q+1)(q+2)}{2}\right]}^g$, which leads to $\alpha_q(g)\geq{\left[\frac{(q+1)(q+2)}{2}\right]}^g$.

Combining the above arguments leads to the statement.
\end{proof}
\textcolor{red}{Equation~\eqref{alphag} generalizes the result in~\cite{ShLiZh18} for $q=1$ to positive integer $q$.}

\section{DOMINATION NUMBER}

For a graph $\mathcal{G}=(\mathcal{V},\mathcal{E})$, a dominating set for $\mathcal{G}$ is a subset $\mathcal{D}$ of $\mathcal{V}$ such that every node not in $\mathcal{D}$ is adjacent to at least one node in $\mathcal{D}$. A dominating set is called a minimal dominating set if  it is not a proper subset of any other dominating set. A dominating set is called a minimum dominating set if it has the smallest cardinality  among all dominating sets. The cardinality of any minimum dominating set for graph $\mathcal{G}$ is  called the domination number of  $\mathcal{G}$, denoted by $\gamma(\mathcal{G})$. For  graph $\mathcal{G}$, the relation $\gamma(\mathcal{G})\leq \alpha(\mathcal{G})$ always holds~\cite{HaPrVo99,WuDuJiLiHu06}.

Let $\gamma_q(g)$ denote the domination number of graph $\mathcal{G}_q(g)$. When $g$ is small, $\gamma_q(g)$ is easily determined. Since $\mathcal{G}_q(0)$ is a complete graph of $q+2$ nodes, $\gamma_q(0)=1$, and every node can be considered as a minimum dominating set for $\mathcal{G}_q(0)$. For $\mathcal{G}_q(1)$, the domination number is obtained in the following lemma.
\begin{lemma}\label{lem}
The domination number of graph $\mathcal{G}_q(1)$ is $\gamma_q(1)=q+1$. And each subset of the hub node set $\mathcal{V}_{\rm h}(\mathcal{G}_q(1))$ containing $q+1$ nodes  is a minimum dominating set for $\mathcal{G}_q(1)$.
\end{lemma}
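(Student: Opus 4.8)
The plan is to first unpack the concrete structure of $\mathcal{G}_q(1)$ and then bound $\gamma_q(1)$ from both sides. By Definition~\ref{defa}, $\mathcal{G}_q(1)$ consists of the $q+2$ hub nodes $h_1(1),\ldots,h_{q+2}(1)$, which form a clique $\mathcal{K}_{q+2}$, together with, for each of the $\binom{q+2}{2}=\frac{(q+1)(q+2)}{2}$ edges $\{h_i(1),h_j(1)\}$, a \emph{fiber} of $q$ newly created nodes that form a $\mathcal{K}_q$ and are joined to both $h_i(1)$ and $h_j(1)$. The key local fact I will record is that a new node lying in the fiber over $\{i,j\}$ is adjacent only to its $q-1$ fiber-mates and to the two hubs $h_i(1),h_j(1)$; in particular, new nodes belonging to different fibers are never adjacent.

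For the upper bound (and simultaneously for the second assertion of the lemma), I would take any $q+1$ hubs, say all of them except $h_{q+2}(1)$, and verify directly that this set $\mathcal{D}$ dominates $\mathcal{G}_q(1)$: the omitted hub is adjacent to every chosen hub, and every fiber sits over an edge $\{i,j\}$ with at least one endpoint in $\{1,\ldots,q+1\}$ (since at most one of $i,j$ equals $q+2$), so every new node is adjacent to a chosen hub. Hence $\gamma_q(1)\le q+1$. Since the labelling of the omitted hub was arbitrary, the very same verification applies to any $(q+1)$-subset of $\mathcal{V}_{\rm h}(\mathcal{G}_q(1))$; once the matching lower bound is established, each such subset has cardinality $q+1=\gamma_q(1)$ and is therefore a minimum dominating set.

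For the lower bound I would argue as follows. Let $\mathcal{D}$ be any dominating set, write $A=\mathcal{D}\cap\mathcal{V}_{\rm h}(\mathcal{G}_q(1))$ and $B=\mathcal{D}\setminus\mathcal{V}_{\rm h}(\mathcal{G}_q(1))$, and put $a=|A|$. Consider a fiber over an edge $\{i,j\}$ with neither $h_i(1)$ nor $h_j(1)$ in $A$. By the local fact above, the only nodes that can dominate the nodes of this fiber are the fiber nodes themselves, so $\mathcal{D}$ must contain at least one node of this fiber. As distinct fibers are disjoint, this forces $|B|\ge\binom{(q+2)-a}{2}$, the number of edges of $\mathcal{K}_{q+2}$ avoiding the $a$ chosen hubs. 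Therefore $|\mathcal{D}|\ge a+\binom{(q+2)-a}{2}$, and it remains to minimize $f(a)=a+\binom{(q+2)-a}{2}$ over $a\in\{0,1,\ldots,q+2\}$. Note this uses only a necessary condition for fiber domination, so no separate bookkeeping for the hubs is needed here.

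The main obstacle is this last optimization together with the care needed in the domination bookkeeping. Substituting $t=(q+2)-a$ turns $f$ into $(q+2)+\tfrac{t^{2}-3t}{2}$, whose integer minimum $q+1$ is attained at $t=1$ and $t=2$; I would present this as a short one-line computation. This yields $\gamma_q(1)\ge q+1$, matching the upper bound and completing the proof. I expect the genuinely delicate point to be justifying that an ``uncovered'' fiber cannot be dominated from outside itself --- that is, the non-adjacency of distinct fibers --- since this is precisely what prevents a single low-degree node from covering several fibers at once and is what pins the bound at $q+1$.
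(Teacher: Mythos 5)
Your proof is correct, and its lower-bound argument takes a genuinely different route from the paper's. The upper bound is the same in both: any $q+1$ hubs dominate the remaining hub (via the hub clique) and every fiber (since at most one endpoint of its edge is omitted), and by symmetry this applies to every $(q+1)$-subset of $\mathcal{V}_{\rm h}(\mathcal{G}_q(1))$. For the lower bound, the paper uses an exchange argument: a non-hub node $v$ in a dominating set has its closed neighborhood inside a single clique $\mathcal{K}^{(i,j)}_{q+2}$, so it can be replaced by $h_i(1)$ or $h_j(1)$ without increasing the cardinality; this normalizes a minimum dominating set to consist of hubs only, and then two missing hubs would leave the fiber over their edge undominated, forcing at least $q+1$ hubs. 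You instead keep the dominating set $\mathcal{D}$ arbitrary and count: writing $a$ for the number of hubs in $\mathcal{D}$, each of the $\binom{(q+2)-a}{2}$ fibers over edges avoiding those hubs must contain a node of $\mathcal{D}$ (this is where you correctly invoke, and rightly flag as the crux, the fact that a fiber's only outside neighbors are its own two hubs), and disjointness of fibers gives $|\mathcal{D}|\ge a+\binom{(q+2)-a}{2}$, whose integer minimum over $a\in\{0,\ldots,q+2\}$ is $q+1$, attained at $t=(q+2)-a\in\{1,2\}$. What each approach buys: the paper's normalization is shorter and avoids any optimization step; your counting inequality is more quantitative and yields extra information the paper's argument obscures---for instance, the $t=2$ case shows that $q$ hubs together with one node of the single uncovered fiber is also a minimum dominating set, so the $(q+1)$-subsets of hubs are minimum dominating sets but not the only ones (consistent with, though not stated in, the lemma).
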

\begin{proof}
By Proposition~\ref{defb}, $\mathcal{G}_q(1)$ can be generated by joining $\frac{(q+1)(q+2)}{2}$ copies of $\mathcal{K}_{q+2}$ at the $q+2$  hub nodes, which are denoted by $\mathcal{K}^{(i,j)}_{q+2}$, $1\leq i<j\leq q+2$.
Now we show that for  any dominating set $\mathcal{D}$ of $\mathcal{G}_q(1)$, we can construct a  dominating set $\mathcal{D}'$ including only hub nodes in $\mathcal{G}_q(1)$.

Suppose that $v \in \mathcal{D}$
is not a hub node. By  construction,  the neighbors of $v$ are all from a single complete graph $\mathcal{K}^{(i,j)}_{q+2}$. We can replace $v$ by hub node $h_i(1)$ or $h_j(1)$ to obtain a dominating set $\mathcal{D}_{v}$. In a similar way, we can replace other non-hub nodes in $\mathcal{D}_{v}$ to obtain a dominating set $\mathcal{D}'$.  Thus, to find the domination number for $\mathcal{G}_q(1)$,  one can only choose hub nodes to form a minimum dominating set.

We continue to show that for any minimum dominating set $\mathcal{D}$ of  $\mathcal{G}_q(1)$ containing only hub nodes, $|\mathcal{D}|=q+1$. By contradiction, assume that $|\mathcal{D}|<q+1$, which means that there  exist at least  two hub nodes $h_i(1)$ and $h_j(1)$ not in $\mathcal{D}$, which belong to the complete graph $\mathcal{K}^{(i,j)}_{q+2}$. Then, the non-hub nodes in  $\mathcal{K}^{(i,j)}_{q+2}$ are not dominated, implying the  $\mathcal{D}$ is not a dominating set. Therefore, $|\mathcal{D}|\geq q+1$ and $\gamma_q(1)\geq q+1$. On the other hand, it is easy to check that any  $q+1$ hub nodes can dominate all nodes in $\mathcal{G}_{q}(1)$. Hence, $\gamma_q(1)=q+1$.
\end{proof}

Figure~\ref{dom1} illustrates the minimum dominating sets of $\mathcal{G}_1(1)$ and $\mathcal{G}_2(1)$.

\begin{figurehere}
\centerline{
  \includegraphics[width=.7\linewidth]{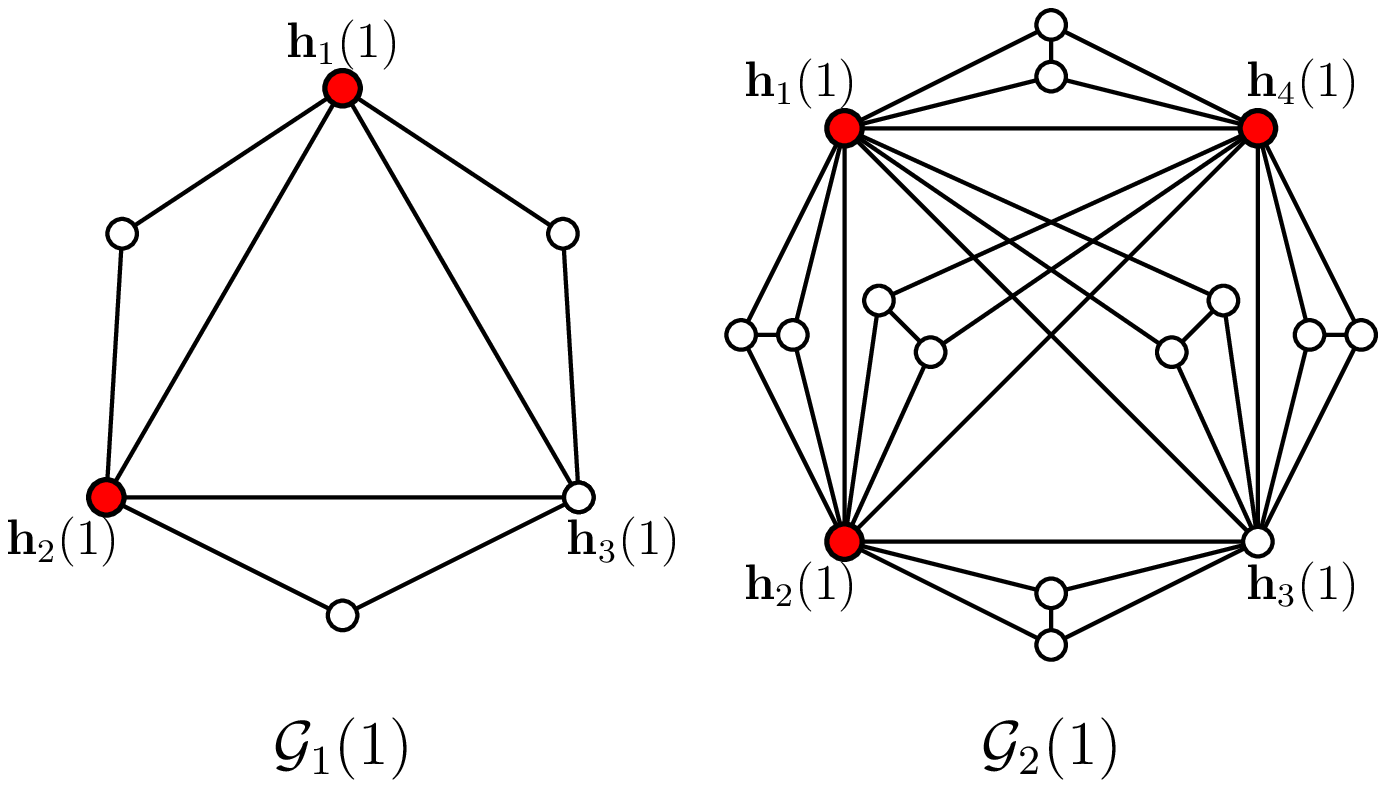}
}
\caption{Examples of the minimum dominating sets for $\mathcal{G}_1(1)$ and $\mathcal{G}_2(1)$.}
\label{dom1}
\end{figurehere}

For a subset $\mathcal{C}$ of $\mathcal{V}(\mathcal{G}_q(g))$, if a node $v$ in $\mathcal{V}(\mathcal{G}_q(g))  \backslash  \mathcal{C}$ is adjacent to at least one node in  $\mathcal{C}$,  we say that $v$ is dominated by $\mathcal{C}$.  Thus, if all nodes in $\mathcal{V}(\mathcal{G}_q(g))  \backslash  \mathcal{C}$ are dominated by $\mathcal{C}$, then $\mathcal{C}$ is a dominating set of $\mathcal{G}_q(g)$.
\begin{lemma}\label{lemD}
For a subset $\mathcal{C}$ of  $\mathcal{V}(\mathcal{G}_q(g))$ corresponding to graph $\mathcal{G}_q(g)$ with $g\geq 1$, if all the non-hub nodes of $\mathcal{G}_q(g)$ are in $\mathcal{C}$ or  dominated by $\mathcal{C}$, then $\mathcal{C}$ is a dominating set of $\mathcal{G}_q(g)$.
\end{lemma}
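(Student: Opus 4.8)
The plan is to observe that, since the hypothesis already guarantees every non-hub node is in $\mathcal{C}$ or dominated by $\mathcal{C}$, establishing that $\mathcal{C}$ is a dominating set reduces to a single missing case: checking that every hub node not lying in $\mathcal{C}$ has a neighbor in $\mathcal{C}$. So I would fix a hub node $h\in\mathcal{V}_{\rm h}(\mathcal{G}_q(g))$ with $h\notin\mathcal{C}$ and show that $h$ is dominated by $\mathcal{C}$; the rest of the lemma is then immediate.

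The key structural input is the clique decomposition already exploited in the proof of the independence number. Passing from $\mathcal{G}_q(g-1)$ to $\mathcal{G}_q(g)$ replaces each edge of $\mathcal{G}_q(g-1)$ by a $(q+2)$-clique consisting of that edge's two endpoints together with $q$ newly created nodes, and every generation-$g$ node has degree exactly $q+1$, being adjacent only to the other $q+1$ members of its own clique. Since $g\geq 1$, the hub $h$ has positive degree in $\mathcal{G}_q(g-1)$ by the degree formula $d^{(g-1,q)}_h=(q+1)^{g}$, so it is an endpoint of at least one edge $hw$ there. Let $\mathcal{K}_{q+2}$ be the clique this edge spawns in $\mathcal{G}_q(g)$, and let $z_1,\ldots,z_q$ (with $q\geq 1$, hence at least one such node) be its $q$ newly created non-hub nodes, each adjacent to $h$.

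I would then test $z_1$ against the hypothesis. If $z_1\in\mathcal{C}$, then $h$ is adjacent to a member of $\mathcal{C}$ and is dominated. Otherwise $z_1$ is dominated by $\mathcal{C}$, so it has a neighbor in $\mathcal{C}$; but the neighbors of $z_1$ are precisely the other clique members $h,w,z_2,\ldots,z_q$. Because $h\notin\mathcal{C}$, the $\mathcal{C}$-neighbor of $z_1$ must be one of $w,z_2,\ldots,z_q$, each of which also lies in $\mathcal{K}_{q+2}$ and is therefore adjacent to $h$. In either case $h$ has a neighbor in $\mathcal{C}$, so $h$ is dominated, which completes the argument.

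The step that carries the real content is the locality observation: a generation-$g$ node sees no vertices outside its own $(q+2)$-clique. This is exactly what forces the $\mathcal{C}$-witness that dominates $z_1$ to sit inside $\mathcal{K}_{q+2}$ and hence to dominate $h$ as well; without it, the domination of $z_1$ could in principle be supplied by a vertex unrelated to $h$. Once this locality is in hand, the two-case analysis is routine and the lemma follows.
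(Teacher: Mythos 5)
Your proof is correct and follows essentially the same route as the paper's: reduce to dominating the hub nodes, look at a $(q+2)$-clique created at generation $g$ that contains the hub $h$, apply the hypothesis to one of its $q$ newly created non-hub nodes, and use the fact that such a node's neighborhood lies entirely inside its generating clique to pull the dominating witness back to a neighbor of $h$. The only cosmetic difference is that the paper takes the clique spawned by an edge between two hub nodes, whereas you take one spawned by an arbitrary edge incident to $h$ in $\mathcal{G}_q(g-1)$; both work, and your explicit statement of the locality fact (which the paper leaves implicit) is a welcome touch.
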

\begin{proof}
We only need to prove that all the $q+2$ hub nodes of $\mathcal{G}_q(g)$ are either dominated by  $\mathcal{C}$ or included in $\mathcal{C}$. For any hub node $h_i(g)$ ($i=1,2,\ldots,q+2$)  of graph $\mathcal{G}_q(g)$, it and another hub node  $h_j(g)$ ($i \neq j$) create  a clique $\mathcal{K}_q$ with $q$ nodes at generation $g$, which, together with $h_i(g)$ and $h_j(g)$,  form a $(q+2)$-clique  in $\mathcal{G}_q(g)$. If  $h_i(g) \in \mathcal{C}$, the lemma holds. For the case that $h_i(g) \notin \mathcal{C}$, we show below that $h_i(g)$ is dominated by a node in  $\mathcal{C}$. Since the $q$ newly introduced nodes in  $\mathcal{K}_q$ are non-hub nodes in  $\mathcal{G}_q(g)$, for   any node $v \in \mathcal{K}_q$, it  is either included in $\mathcal{C}$ or dominated by  $\mathcal{C}$. If $v \in \mathcal{C} $, then   $h_i(g)$ is dominated by  $\mathcal{C}$. If $v$ is not in $\mathcal{C}$ but dominated by $h_j(g)$ or other non-hub nodes in  its generating $(q+2)$-clique, then $h_i(g)$ is dominated by  $\mathcal{C}$.
 \end{proof}

We are now in position to determine  the domination number $\gamma_q(g)$ of graph $\mathcal{G}_q(g)$ for the case of $g\geq 1$.
\begin{theorem}\label{theoremD}
For all $g\geq1$, the domination number of $\mathcal{G}_q(g)$ is:
\begin{equation*}
 \gamma_q(g)=\frac{q^2+2q-1}{q+3}{\left[\frac{(q+1)(q+2)}{2}\right]}^{g-1}+\frac{2(q+2)}{q+3}.
 \end{equation*}
\end{theorem}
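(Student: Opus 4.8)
The plan is to reduce everything to the single first-order recursion
\[
\gamma_q(g+1)=\frac{(q+1)(q+2)}{2}\,\gamma_q(g)-q(q+2),\qquad g\geq 1,
\]
and then solve it with the initial value $\gamma_q(1)=q+1$ supplied by Lemma~\ref{lem}. The affine map here has multiplier $\frac{(q+1)(q+2)}{2}$ and fixed point $\frac{2(q+2)}{q+3}$, so writing $\gamma_q(g)-\frac{2(q+2)}{q+3}=\left[\frac{(q+1)(q+2)}{2}\right]^{g-1}\!\left(\gamma_q(1)-\frac{2(q+2)}{q+3}\right)$ yields the stated closed form at once, since $\gamma_q(1)-\frac{2(q+2)}{q+3}=\frac{q^2+2q-1}{q+3}$. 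Throughout I use Lemma~\ref{lemD} to replace ``dominating set'' by ``a vertex set dominating all non-hub nodes,'' as the hub nodes then come dominated for free; this reduction is what makes the self-similar decomposition tractable.

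To exploit Proposition~\ref{defb}, I regard $\mathcal{G}_q(g+1)$ as the union of the $\frac{(q+1)(q+2)}{2}$ blocks $\mathcal{G}_q^{(i,j)}(g)$, where block $(i,j)$ shares exactly two of its hubs, $h_i(g+1)$ and $h_j(g+1)$, with the rest of the network, while its other $q$ hubs become ordinary non-hub nodes of $\mathcal{G}_q(g+1)$; distinct blocks meet only at these shared hubs. The key local observation is that, by Lemma~\ref{lemD} applied \emph{inside} a block, a vertex subset of a block dominates all non-hub nodes of that block as soon as it dominates the block's own non-hub nodes, so the demoted hubs cost nothing extra. Using the symmetric role of the $q+2$ hubs, I introduce three partial domination numbers for the block $\mathcal{G}_q(g)$: the minimum number of internal nodes (all block nodes except a distinguished pair of hubs) needed to dominate all non-hub nodes of the block when $2$, $1$, or $0$ of that distinguished pair are already in the set; call them $A_g\le B_g\le C_g$. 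A direct inspection of $\mathcal{G}_q(1)$ gives $A_1=q-1$, $B_1=q$, $C_1=q+1$.

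For any $\mathcal{C}$ dominating all non-hub nodes of $\mathcal{G}_q(g+1)$, let $\tau$ be the number of hubs it contains. Since blocks overlap only in shared hubs, the part of $\mathcal{C}$ in block $(i,j)$ must dominate that block's non-hub nodes using only its own internal nodes together with whichever of $h_i,h_j$ lie in $\mathcal{C}$; summing these block bounds and counting how many pairs see $2$, $1$, or $0$ selected hubs gives
\[
|\mathcal{C}|\ \geq\ \Phi_g(\tau):=\tau+\binom{\tau}{2}A_g+\tau(q+2-\tau)B_g+\binom{q+2-\tau}{2}C_g,
\]
with equality attainable for each $\tau$ (the block configurations are independent across blocks once all involved shared hubs are present). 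Hence $\gamma_q(g+1)=\min_{0\le\tau\le q+2}\Phi_g(\tau)$, and the analogous minima over $\tau\in[2,q+2]$, $[1,q+1]$, $[0,q]$, after subtracting the forced terminals, compute $A_{g+1},B_{g+1},C_{g+1}$. The heart of the proof is the inductive identity $A_g+C_g=2B_g$, which makes $\Phi_g$ affine in $\tau$ with slope $1+\frac{q+1}{2}(A_g-C_g)$; granting this, the slope is negative and $\Phi_g$ is strictly decreasing, so its minimum sits at the right endpoint. This delivers $\gamma_q(g+1)=\Phi_g(q+2)=(q+2)+\binom{q+2}{2}A_g$ together with $A_g=\gamma_q(g)-2$, which is precisely the recursion above.

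The main obstacle is the simultaneous inductive bookkeeping for the three partial domination numbers: one must show in one stroke that the linearity relation $A_g+C_g=2B_g$ is reproduced by passing from level $g$ to $g+1$, that $A_g=\gamma_q(g)-2$, and that the slope $1+\frac{q+1}{2}(A_g-C_g)$ remains negative. I expect the endpoint behaviour, rather than the algebra, to be the delicate point, since it amounts to proving that making one further hub available to its incident blocks always saves at least its unit cost, uniformly in $g$; this is exactly what the affine-and-decreasing property of $\Phi_g$ encodes, and it is what forces the optimum to place all $q+2$ hubs of $\mathcal{G}_q(g+1)$ into the dominating set once $g\geq 1$. Once the recursion is secured, solving it and matching the base value $\gamma_q(1)=q+1$ is routine.
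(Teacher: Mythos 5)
Your proposal is correct, and at the top level it follows the same skeleton as the paper's proof: both rest on the self-similar decomposition of Proposition~\ref{defb}, both use Lemma~\ref{lemD} to reduce domination to dominating the non-hub nodes (so that restrictions to blocks behave well), both arrive at the affine recursion $\gamma_q(g+1)=\frac{(q+1)(q+2)}{2}\gamma_q(g)-q(q+2)$ for $g\geq 1$, and both solve it from the initial value $\gamma_q(1)=q+1$ of Lemma~\ref{lem}. Where you genuinely diverge is in the mechanism that establishes this recursion. The paper proves the inequality $\gamma_q(g+1)\geq\frac{(q+1)(q+2)}{2}\gamma_q(g)-q(q+2)$ by restricting a minimum dominating set to the copies and bounding the hub overcount crudely by $q(q+2)$, and then matches it with an explicitly constructed dominating set (all hubs of $\mathcal{G}_q(g+1)$ plus iteratively merged copy-level minimum dominating sets). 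You instead set up an exact dynamic program over hub-boundary states: the partial domination numbers $A_g\leq B_g\leq C_g$, the exact identity $\gamma_q(g+1)=\min_{0\leq\tau\leq q+2}\Phi_g(\tau)$, and the inductive invariants $A_g+C_g=2B_g$, $A_g=\gamma_q(g)-2$, and negative slope. The claims you leave as bookkeeping do check out: the base values $A_1=q-1$, $B_1=q$, $C_1=q+1$ are correct, and since $\Phi_g$ is affine and decreasing, the endpoint minima give $A_{g+1}+C_{g+1}=\Phi_g(q+2)-2+\Phi_g(q)=2\Phi_g(q+1)-2=2B_{g+1}$, $A_{g+1}=\Phi_g(q+2)-2=\gamma_q(g+1)-2$, and $s_{g+1}=1+\frac{q+1}{2}\left(A_{g+1}-C_{g+1}\right)=(q+1)s_g-q<0$, so the induction closes and the recursion follows from $\gamma_q(g+1)=\Phi_g(q+2)=(q+2)+\frac{(q+1)(q+2)}{2}\left(\gamma_q(g)-2\right)$. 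The trade-off between the two routes: your transfer-matrix style argument proves, rather than exhibits, that the optimum places all $q+2$ hubs in the dominating set — it accounts for every dominating set via some $\tau$ and shows monotonicity in $\tau$ — and as a by-product characterizes optimal sets under arbitrary boundary conditions; the paper's sandwich argument is shorter and lighter, but its equality step depends on guessing the correct extremal configuration and verifying it iteratively, which your approach makes unnecessary.
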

\begin{proof}
From Proposition~\ref{defb},  $\mathcal{G}_q(g+1)$ can be generated by joining $\frac{(q+1)(q+2)}{2}$ copies of $\mathcal{G}_q(g)$ at the $q+2$ hub nodes, denoted by  $\mathcal{G}_{q}^{(1)}(g)$, $\mathcal{G}_{q}^{(2)}(g)$, $\ldots$, $\mathcal{G}_{q}^{(\frac{(q+1)(q+2)}{2})}(g)$, respectively.  Let $\mathcal{D}(\mathcal{G}_q(g+1))$ represent a dominating set of $\mathcal{G}_q(g+1)$. For any $i=1,2,\ldots, \frac{(q+1)(q+2)}{2}$,  the non-hub nodes of $\mathcal{G}_{q}^{(i)}(g)$ are dominated or belong to $\mathcal{D}(\mathcal{G}_q(g+1))\cap\mathcal{G}_{q}^{(i)}(g)$. By Lemma~\ref{lemD}, $\mathcal{D}(\mathcal{G}_q(g+1))\cap\mathcal{G}_{q}^{(i)}(g)$ is  a dominating set of $\mathcal{G}_{q}^{(i)}(g)$.  Then, $|\mathcal{D}(\mathcal{G}_q(g+1))|$ can be computed in terms of $|\mathcal{D}(\mathcal{G}_q(g+1))\cap\mathcal{G}_q^{(i)}(g)|$ as
\begin{align*}
|\mathcal{D}(\mathcal{G}_q(g+1))| &=-q|\mathcal{D}(\mathcal{G}_q(g+1))\cap\mathcal{V}_{\rm h}(\mathcal{G}_q(g+1))|+ \\
&\quad \sum_{i=1}^{\frac{(q+1)(q+2)}{2}}|\mathcal{D}(\mathcal{G}_q(g+1))\cap\mathcal{G}_q^{(i)}(g)|,
\end{align*}
where the first term on the right-hand side compensates for the overcounting of the hub nodes chosen in $\mathcal{D}(\mathcal{G}_q(g+1))$.
Since $|\mathcal{V}_{\rm h}(\mathcal{G}_q(g+1))|=q+2$,
\begin{align*}
|\mathcal{D}(\mathcal{G}_q(g+1))|\geq & \sum_{i=1}^{\frac{(q+1)(q+2)}{2}}|\mathcal{D}(\mathcal{G}_q(g+1))\cap\mathcal{G}_q^{(i)}(g)|\\
&-q(q+2).
\end{align*}
Particularly, when $\mathcal{D}(\mathcal{G}_q(g+1))$ is a minimum dominating set of $\mathcal{G}_q(g+1)$, denoted by $\mathcal{D}_{\rm min}(\mathcal{G}_q(g+1))$,
\begin{align}
&\quad |\mathcal{D}_{\rm min}(\mathcal{G}_q(g+1))| \notag \\ &\geq-q(q+2)+\sum_{i=1}^{\frac{(q+1)(q+2)}{2}}|\mathcal{D}_{\rm min}(\mathcal{G}_q(g+1))\cap\mathcal{G}_q^{(i)}(g)|\nonumber\\
&\geq-q(q+2)+\frac{(q+1)(q+2)}{2}|\mathcal{D}_{\rm min}(\mathcal{G}_q(g))|,\label{eqD}
\end{align}
where the second inequality is due to the fact that $\mathcal{D}_{\rm min}(\mathcal{G}_q(g+1))\cap\mathcal{G}_q^{(i)}(g)$ is a dominating set of $\mathcal{G}_q^{(i)}(g)$, with cardinality larger than that of a minimum dominating set for $\mathcal{G}_q^{(i)}(g)$.

Note that the  terms on both sides of Eq.~\eqref{eqD} are equal to each other,  when all hub nodes of $\mathcal{G}_q(g+1)$ are in  $\mathcal{D}_{\rm min}(\mathcal{G}_q(g+1))$ and the intersection of $\mathcal{D}_{\rm min}(\mathcal{G}_q(g+1))$ with each $\mathcal{G}_q^{(i)}(g)$ forms a minimum dominating set of $\mathcal{G}_q^{(i)}(g)$. In other words, $\mathcal{D}_{\rm min}(\mathcal{G}_q(g+1))$ is the union of the minimum dominating sets $\mathcal{D}^{(i)}_{\rm min}(\mathcal{G}_q(g))$ of  $\mathcal{G}_q^{(i)}(g)$,  such that $\mathcal{D}_{\rm min}(\mathcal{G}_q(g+1))$ includes all hub nodes of $\mathcal{G}_q(g+1)$. Thus, for
 each  $\mathcal{D}^{(i)}_{\rm min}(\mathcal{G}_q(g))$, it contains as many hub nodes in  $\mathcal{G}_q^{(i)}(g)$ as  possible. For $g=1$,  all hub nodes of $\mathcal{G}_q(2)$ and  arbitrary other $q-1$ hub nodes  for each $\mathcal{G}_q^{(i)}(1)$ constitute a minimum dominating set  $\mathcal{D}_{\rm min}(\mathcal{G}_q(2))$ of $\mathcal{G}_q(2)$. By Lemma~\ref{lemD}, $\mathcal{D}_{\rm min}(\mathcal{G}_q(2))\cap\mathcal{G}_q^{(i)}(1)$ forms a minimum dominating set of $\mathcal{G}_q^{(i)}(1)$. It is easy to verify that for $g=1$, the   terms on both sides of Eq.~\eqref{eqD} are equal to each other. Using $\mathcal{D}_{\rm min}(\mathcal{G}_q(2))$, we can construct   a minimum dominating set $\mathcal{D}_{\rm min}(\mathcal{G}_q(3))$ of   $\mathcal{G}_q(3)$ by merging the minimum dominating sets $\mathcal{D}^{(i)}_{\rm min}(\mathcal{G}_q(2))$ for each $\mathcal{G}_q^{(i)}(2)$ and removing those duplicate hub nodes of   $\mathcal{G}_q(3)$. In a similar way, we can iteratively construct   a minimum dominating set $\mathcal{D}_{\rm min}(\mathcal{G}_q(g))$ for $\mathcal{G}_q(g)$ when $g\geq 4$,  for which the equal mark holds for Eq.~\eqref{eqD}.
Then, we have
\begin{equation}
\gamma_q(g+1)=-q(q+2)+\frac{(q+1)(q+2)}{2}\gamma_q(g)
\end{equation}
for all $g\geq1$. With initial condition $\gamma_q(1)=q+1$, the above equation is solved to obtain:
\begin{equation*}
 \gamma_q(g)=\frac{q^2+2q-1}{q+3}{\left[\frac{(q+1)(q+2)}{2}\right]}^{g-1}+\frac{2(q+2)}{q+3}.
 \end{equation*}
This finishes the proof.
\end{proof}
\textcolor{red}{ Theorem~\ref{theoremD} is reduced to the result in~\cite{ShLiZh17} obtained for $q=1$.}

\section{CHROMATIC NUMBER}

Node coloring of a graph  $\mathcal{G}=(\mathcal{V},\mathcal{E})$  is a way of coloring the nodes of $\mathcal{G}$ such that no two adjacent nodes in $\mathcal{V}$ are of the same color. The chromatic number of a graph $\mathcal{G}$, denoted by $\chi(\mathcal{G})$, is the smallest number of colors needed to color the nodes of $\mathcal{G}$. For graph $\mathcal{G}$, node coloring is closely related to its  chromatic polynomial $P(\mathcal{G},\lambda)$, which is a polynomial  counting the number of distinct ways to color $\mathcal{G}$ with $\lambda$ or fewer colors.  The chromatic polynomial was first introduced by George David Birkhoff~\cite{Ge12}.  It contains at least as much information about the colorability of graph $\mathcal{G}$ as does the chromatic number. Indeed, $\chi(\mathcal{G})$ is the smallest positive integer that is not a root of the chromatic polynomial, that is,
 \begin{equation}\label{cnumber}
 \chi(\mathcal{G})={\rm min}\{\lambda:\,P(\mathcal{G},\lambda)>0\}.
 \end{equation}
 The chromatic polynomial for the $q$-node complete graph $\mathcal{K}_q$ is
 \begin{equation}\label{techb}
 P(\mathcal{K}_q,\lambda)=\lambda(\lambda-1)(\lambda-2)\cdots(\lambda-q+1).
 \end{equation}
For two graphs $\mathcal{G}=(\mathcal{V}(\mathcal{G}),\mathcal{E}(\mathcal{G}))$ and  $\mathcal{G'}=(\mathcal{V}(\mathcal{G'}),\mathcal{E}(\mathcal{G'}))$, let $\mathcal{G} \cup\mathcal{G'}$ represent their union with node set $\mathcal{V}(\mathcal{G}) \cup \mathcal{V}(\mathcal{G'})$ and edge set $\mathcal{E}(\mathcal{G}) \cup \mathcal{E}(\mathcal{G'})$, and let $\mathcal{G} \cap\mathcal{G'}$ denote their intersection  with node set $\mathcal{V}(\mathcal{G}) \cap \mathcal{V}(\mathcal{G'})$ and edge set $\mathcal{E}(\mathcal{G}) \cap \mathcal{E}(\mathcal{G'})$. Then, the chromatic polynomial of graph $\mathcal{G}\cup\mathcal{G'}$ is~\cite{Bi93}
 \begin{equation}\label{tech}
 P(\mathcal{G}\cup\mathcal{G'},\lambda)=\frac{P(\mathcal{G},\lambda)\cdot P(\mathcal{G'},\lambda)}{P(\mathcal{G}\cap\mathcal{G'},\lambda)}\,.
 \end{equation}
 \begin{lemma}\label{lemb}
For all $g\geq0$, the chromatic polynomial of $\mathcal{G}_q(g)$ is
\begin{align}\label{chromatic}
& P(\mathcal{G}_q(g),\lambda) \notag\\
 &=\lambda(\lambda-1)\prod_{i=2}^{q+1}{(\lambda-i)}^{\frac{2}{q(q+3)}{\left[\frac{(q+1)(q+2)}{2}\right]}^{g+1}-\frac{2}{q(q+3)}}.
\end{align}
 \end{lemma}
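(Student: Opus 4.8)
The plan is to prove~\eqref{chromatic} by induction on $g$, constructing $\mathcal{G}_q(g+1)$ from $\mathcal{G}_q(g)$ one clique at a time and applying the gluing identity~\eqref{tech} repeatedly. First I would dispose of the base case $g=0$: since $\mathcal{G}_q(0)=\mathcal{K}_{q+2}$, equation~\eqref{techb} gives $P(\mathcal{G}_q(0),\lambda)=\lambda(\lambda-1)\prod_{i=2}^{q+1}(\lambda-i)$, and a one-line computation shows the exponent appearing in~\eqref{chromatic} at $g=0$ equals $\frac{(q+1)(q+2)-2}{q(q+3)}=1$, matching.

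For the inductive step I would use Definition~\ref{defa}: $\mathcal{G}_q(g+1)$ arises from $\mathcal{G}_q(g)$ by attaching to each of its $M_{g,q}$ edges a fresh copy of $\mathcal{K}_q$ joined to both endpoints, so that the $q$ new nodes together with the two endpoints form a new $(q+2)$-clique. Enumerating the edges $e_1,\dots,e_{M_{g,q}}$ of $\mathcal{G}_q(g)$ and letting $K_t$ denote the $(q+2)$-clique built on $e_t$, I would add them successively: setting $\mathcal{H}_0=\mathcal{G}_q(g)$ and $\mathcal{H}_t=\mathcal{H}_{t-1}\cup K_t$, the key observation is that $\mathcal{H}_{t-1}\cap K_t$ is exactly the edge $e_t$, that is a $\mathcal{K}_2$, because the $q$ interior nodes of $K_t$ are brand new and absent from $\mathcal{H}_{t-1}$. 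Formula~\eqref{tech} then applies with a complete-graph intersection, and since $P(\mathcal{K}_{q+2},\lambda)/P(\mathcal{K}_2,\lambda)=\prod_{i=2}^{q+1}(\lambda-i)$ by~\eqref{techb}, each attachment multiplies the chromatic polynomial by $\prod_{i=2}^{q+1}(\lambda-i)$. After all $M_{g,q}$ attachments this gives the recursion
\[
P(\mathcal{G}_q(g+1),\lambda)=P(\mathcal{G}_q(g),\lambda)\cdot\left[\prod_{i=2}^{q+1}(\lambda-i)\right]^{M_{g,q}}.
\]

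To close the induction I would compare exponents. Writing the exponent of $\prod_{i=2}^{q+1}(\lambda-i)$ in~\eqref{chromatic} as $E(g)=\frac{2}{q(q+3)}\left[\frac{(q+1)(q+2)}{2}\right]^{g+1}-\frac{2}{q(q+3)}$, the recursion requires $E(g+1)-E(g)=M_{g,q}=\left[\frac{(q+1)(q+2)}{2}\right]^{g+1}$; invoking $\frac{(q+1)(q+2)}{2}-1=\frac{q(q+3)}{2}$ makes this identity fall out immediately, so the closed form propagates from $g$ to $g+1$ and the theorem follows.

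The main obstacle is not the algebra but justifying cleanly that the iterative gluing is legitimate, namely that at every stage the newly attached clique meets the already-constructed graph in precisely a $\mathcal{K}_2$ and never in a larger or disconnected overlap, so that the clique-sum hypothesis underlying~\eqref{tech} is genuinely met. This must be checked with care in the case where several attached cliques share a common hub endpoint, since there one must confirm that the shared endpoint alone, and not any spurious extra adjacency, constitutes the intersection.
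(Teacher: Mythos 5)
Your proof is correct, but it takes a genuinely different route from the paper's. The paper proves Lemma~\ref{lemb} via the self-similar decomposition of Proposition~\ref{defb}: it writes $\mathcal{G}_q(g+1)$ as the union of the $\frac{(q+1)(q+2)}{2}$ copies of $\mathcal{G}_q(g)$ together with the clique $\mathcal{K}_{q+2}$ spanned by the hub nodes, glued along $\mathcal{K}_2$'s, and applies \eqref{tech} to get the recursion
$P(\mathcal{G}_q(g+1),\lambda)=\bigl(P(\mathcal{G}_q(g),\lambda)\bigr)^{\frac{(q+1)(q+2)}{2}}\, P(\mathcal{K}_{q+2},\lambda)\,/\,\bigl(P(\mathcal{K}_2,\lambda)\bigr)^{\frac{(q+1)(q+2)}{2}}$,
which it then solves. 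You instead work directly from the forward edge-corona construction of Definition~\ref{defa}, attaching one new $(q+2)$-clique per edge of $\mathcal{G}_q(g)$ and gluing these in one at a time along the $\mathcal{K}_2$ formed by the host edge, which yields the multiplicative recursion $P(\mathcal{G}_q(g+1),\lambda)=P(\mathcal{G}_q(g),\lambda)\bigl[\prod_{i=2}^{q+1}(\lambda-i)\bigr]^{M_{g,q}}$. Both recursions solve to \eqref{chromatic}; your exponent bookkeeping $E(g+1)-E(g)=M_{g,q}$, via $\frac{(q+1)(q+2)}{2}-1=\frac{q(q+3)}{2}$, is exactly right. What your approach buys: it is more elementary and self-contained, since it bypasses Proposition~\ref{defb} entirely, and the exponent evolves by a clean additive increment $M_{g,q}$ rather than by the affine map $E\mapsto\frac{(q+1)(q+2)}{2}E+1$ implicit in the paper's recursion. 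What the paper's approach buys: it reuses the self-similar viewpoint that drives the rest of the paper (domination number, matchings, etc.) and invokes \eqref{tech} only $\frac{(q+1)(q+2)}{2}$ times per iteration instead of $M_{g,q}$ times. Finally, the worry you flag at the end---that several attached cliques sharing a hub endpoint could overlap in something other than a $\mathcal{K}_2$---is settled by the observation you already made: the $q$ interior nodes of each attached clique $K_t$ are brand new, so the intersection of $K_t$ with $\mathcal{H}_{t-1}$, in nodes and in edges, is precisely $e_t$ with its two endpoints, regardless of how many earlier cliques were glued onto those same endpoints; so the clique-intersection hypothesis of \eqref{tech} holds at every step and no further care is needed.
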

 \begin{proof}
Proposition~\ref{defb} shows that  $\mathcal{G}_q(g+1)$ is in fact an amalgamation of $\frac{(q+1)(q+2)}{2}$ copies of $\mathcal{G}_q(g)$ at the $q+2$ hub nodes, denoted by  $\mathcal{G}_{q}^{(1)}(g)$, $\mathcal{G}_{q}^{(2)}(g)$, $\ldots$, $\mathcal{G}_{q}^{(\frac{(q+1)(q+2)}{2})}(g)$, respectively.  Since the hub nodes of  $\mathcal{G}_q(g+1)$ are linked to each other, $\mathcal{G}_q(g+1)$ can be also obtained from  the $\frac{(q+1)(q+2)}{2}$ copies of $\mathcal{G}_q(g)$ by merging them at the $\frac{(q+1)(q+2)}{2}$ edges of the complete graph $\mathcal{K}_{q+2}$ formed by their $q+2$ hub nodes. That is,
\begin{equation*}
\mathcal{G}_q(g+1)=\mathcal{K}_{q+2}\cup\mathcal{G}_q^{(1)}(g)\cup\mathcal{G}_q^{(2)}(g)\cup\ldots\cup\mathcal{G}_q^{\frac{(q+1)(q+2)}{2}}(g),
\end{equation*}
where
\begin{equation*}
\mathcal{K}_{q+2}\cap\mathcal{G}_q^{(1)}(g)=\mathcal{K}_2
\end{equation*}
and
\begin{equation*}
\bigg(\mathcal{K}_{q+2}\cup\mathcal{G}_q^{(1)}(g)\cup\mathcal{G}_q^{(2)}(g)\ldots\cup\mathcal{G}_q^{(j)}(g)\bigg)\cap\mathcal{G}_q^{(j+1)}(g)=\mathcal{K}_2,
\end{equation*}
for all $1\leq j<\frac{(q+1)(q+2)}{2}$. Thus, by using Eq.~\eqref{tech} $\frac{(q+1)(q+2)}{2}$ times, we  establish a recursion relation between $P(\mathcal{G}_q(g+1),\lambda)$ and $P(\mathcal{G}_q(g),\lambda)$ as
 \begin{equation}\label{ChomaticA}
   P(\mathcal{G}_q(g+1),\lambda)=\frac{ {\left(P(\mathcal{G}_q(g),\lambda)\right)}^{\frac{(q+1)(q+2)}{2}}\cdot P(\mathcal{K}_{q+2},\lambda)}{{\left(P(\mathcal{K}_2,\lambda)\right)}^{\frac{(q+1)(q+2)}{2}}}.
 \end{equation}
 With the initial condition $P(\mathcal{G}_q(0),\lambda)=P(\mathcal{K}_{q+2},\lambda)$ and Eq.~\eqref{techb}, Eq.~\eqref{ChomaticA} is solved to obtain Eq.~\eqref{chromatic}.
 \end{proof}

Combining Eq.~\eqref{cnumber} and Lemma~\ref{lemb},  one obtains the following theorem.
\begin{theorem}
  For all $g\geq0$, the chromatic number of $\mathcal{G}_q(g)$ is
  $\chi(\mathcal{G}_q(g))=q+2$.
\end{theorem}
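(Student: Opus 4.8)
The plan is to read the chromatic number directly off the chromatic polynomial established in Lemma~\ref{lemb}, using the characterization in Eq.~\eqref{cnumber} that $\chi(\mathcal{G}_q(g))$ is the smallest positive integer $\lambda$ at which $P(\mathcal{G}_q(g),\lambda)$ is strictly positive. Concretely, I would start from the factored form in Eq.~\eqref{chromatic}, namely $P(\mathcal{G}_q(g),\lambda)=\lambda(\lambda-1)\prod_{i=2}^{q+1}(\lambda-i)^{E}$, where the exponent $E=\frac{2}{q(q+3)}\big[\frac{(q+1)(q+2)}{2}\big]^{g+1}-\frac{2}{q(q+3)}$ is a positive integer (for instance $E=1$ when $g=0$, recovering the chromatic polynomial of $\mathcal{K}_{q+2}$), so the right-hand side is a genuine polynomial with nonnegative integer exponents.

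First I would locate the integer roots of this polynomial. Reading off the factors $\lambda$, $(\lambda-1)$, and $(\lambda-i)^{E}$ for $i=2,\ldots,q+1$, the roots are exactly $\lambda=0,1,2,\ldots,q+1$. Consequently $P(\mathcal{G}_q(g),\lambda)=0$ for every positive integer $\lambda$ with $1\leq\lambda\leq q+1$, so none of these values can serve as the chromatic number.

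Next I would evaluate the polynomial at $\lambda=q+2$. Each factor $\lambda-i$ with $0\leq i\leq q+1$ then equals $q+2-i\geq 1>0$, and since $E>0$ the entire expression is a product of strictly positive numbers, whence $P(\mathcal{G}_q(g),q+2)>0$. Thus $q+2$ is the least positive integer at which the chromatic polynomial is positive, and Eq.~\eqref{cnumber} immediately yields $\chi(\mathcal{G}_q(g))=q+2$.

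As for difficulty, there is essentially no obstacle remaining: all of the real work is carried by Lemma~\ref{lemb}, and the theorem is a direct corollary of the factored polynomial together with Eq.~\eqref{cnumber}. As an independent sanity check one may observe that $\mathcal{G}_q(g)$ always contains a copy of the clique $\mathcal{K}_{q+2}$ (it is $\mathcal{G}_q(0)$, and the edge corona construction keeps producing $(q+2)$-cliques), which forces the lower bound $\chi(\mathcal{G}_q(g))\geq q+2$; combined with the existence of a proper $(q+2)$-coloring implied by the nonvanishing of $P$ at $\lambda=q+2$, this reconfirms the exact value without invoking the full polynomial.
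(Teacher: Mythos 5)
Your proposal is correct and follows exactly the route the paper takes: the paper states the theorem as an immediate consequence of combining Eq.~\eqref{cnumber} with the factored chromatic polynomial of Lemma~\ref{lemb}, which is precisely your argument (roots at $\lambda=0,1,\ldots,q+1$, strict positivity at $\lambda=q+2$), merely spelled out in more detail. Your closing clique-based sanity check is a nice independent confirmation but is not part of the paper's reasoning.
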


\section{ENUMERATION OF ACYCLIC ORIENTATIONS}

For an undirected graph $\mathcal{G}=(\mathcal{V},\mathcal{E})$, \textcolor{red}{ an acyclic orientation of $\mathcal{G}$ is to assign a direction to each edge in $\mathcal{G}$ to make it into a directed acyclic graph~\cite{St73}.} An acyclic orientation of  $\mathcal{G}$ is called an acyclic root-connected orientation  when there exists a distinct root node reachable from every node in $\mathcal{G}$ in the resulting directed graph~\cite{GrZa83}.

This section is devoted to the determination of the number  of acyclic orientations, as well as  the number  of acyclic  root-connected orientations in graph $\mathcal{G}_q(g)$. To achieves this goal, we  resort to the tool of Tutte polynomial~\cite{Tu54}. For graph $\mathcal{G}=(\mathcal{V}(\mathcal{G}),\mathcal{E}(\mathcal{G}))$, its Tutte polynomial $T(\mathcal{G};x,y)$ is defined as
\begin{equation}
T(\mathcal{G};x,y)=\sum_{\mathcal{H}\subseteq\mathcal{G}}{(x-1)}^{r(\mathcal{G})-r(\mathcal{H})}{(y-1)}^{n(\mathcal{H})},
\end{equation}
where the sum runs over all the spanning subgraphs $\mathcal{H}$ of $\mathcal{G}$, $r(\mathcal{G})=|\mathcal{V}(\mathcal{G})|-k(\mathcal{G})$ is the rank of $\mathcal{G}$, $n(\mathcal{G})=|\mathcal{E}(\mathcal{G})|-|\mathcal{V}(\mathcal{G})|+k(\mathcal{G})$ is the nullity of $\mathcal{G}$, and $k(\mathcal{G})$ is the number of components of $\mathcal{G}$.

The evaluation of the Tutte polynomial of graph $\mathcal{G}$ at a particular point on $(x,y)$-plane is related to many combinatorial  aspects of  $\mathcal{G}$~\cite{We99}.  It has been shown that  $T(\mathcal{G};2,0)$ equals the number of acyclic orientations of $\mathcal{G}$~\cite{St73}, while
 $T(\mathcal{G};1,0)$ is equivalent to the number of root-connected acyclic orientations of $\mathcal{G}$~\cite{GrZa83}.
Moreover, the Tutte polynomial $T(\mathcal{G};x,y)$ is also relevant to  the chromatic polynomial $P(\mathcal{G},\lambda)$ of graph  $\mathcal{G}$. Specifically,  $P(\mathcal{G},\lambda)$ can be represented in terms of $T(\mathcal{G};x,y)$ at $y=0$ as
 \begin{equation}\label{relat}
 P(\mathcal{G},\lambda)={(-\lambda)}^{k(\mathcal{G})}{(-1)}^{n(\mathcal{G})}T(\mathcal{G};1-\lambda,0).
 \end{equation}
This connection between the Tutte polynomial and the chromatic polynomial allows to determine the number of acyclic orientations and root-connected acyclic orientations for $\mathcal{G}_q(g)$.
 \begin{theorem}
 For graph $\mathcal{G}_q(g)$ with $g\geq0$, the number of acyclic orientations  is
   \begin{equation}\label{AO}
   N_{\rm ao}(\mathcal{G}_q(g))=2{\left[\frac{(q+2)!}{2}\right]}^{\frac{2}{q(q+3)}{\left[\frac{(q+1)(q+2)}{2}\right]}^{g+1}-\frac{2}{q(q+3)}},
   \end{equation}
and the number of root-connected acyclic orientations  is
   \begin{equation}\label{RAO}
   N_{\rm rao}(\mathcal{G}_q(g))={[(q+1)!]}^{\frac{2}{q(q+3)}{\left[\frac{(q+1)(q+2)}{2}\right]}^{g+1}-\frac{2}{q(q+3)}}.
   \end{equation}
 \end{theorem}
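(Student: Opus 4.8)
The plan is to read off both orientation counts from the chromatic polynomial of $\mathcal{G}_q(g)$ already computed in Lemma~\ref{lemb}, feeding it through the identity~\eqref{relat} that links the chromatic and Tutte polynomials, together with the two facts recalled above that $T(\mathcal{G};2,0)=N_{\rm ao}(\mathcal{G})$ and $T(\mathcal{G};1,0)=N_{\rm rao}(\mathcal{G})$. Throughout I would use that $\mathcal{G}_q(g)$ is connected, so $k(\mathcal{G}_q(g))=1$ and $n(\mathcal{G}_q(g))=M_{g,q}-N_{g,q}+1$, which reduces~\eqref{relat} to $P(\mathcal{G}_q(g),\lambda)=-\lambda\,(-1)^{n}\,T(\mathcal{G}_q(g);1-\lambda,0)$. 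For brevity I write $E=\frac{2}{q(q+3)}[\frac{(q+1)(q+2)}{2}]^{g+1}-\frac{2}{q(q+3)}$ for the common exponent in Lemma~\ref{lemb}, which is a nonnegative integer.

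For the acyclic orientations I would substitute $\lambda=-1$, so that $1-\lambda=2$, and thereby express $T(\mathcal{G}_q(g);2,0)$ as $\pm P(\mathcal{G}_q(g),-1)$. Evaluating the closed form of Lemma~\ref{lemb} at $\lambda=-1$, the product telescopes through $\prod_{i=2}^{q+1}(i+1)=\prod_{j=3}^{q+2}j=(q+2)!/2$, giving $|P(\mathcal{G}_q(g),-1)|=2\,[(q+2)!/2]^{E}$. Since the number of acyclic orientations is a positive integer, the accumulated sign factors must multiply to $+1$, which yields exactly~\eqref{AO}.

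The root-connected count requires an extra step, and this is where I expect the only genuine obstacle. Substituting $\lambda=0$ (so that $1-\lambda=1$) directly into~\eqref{relat} is degenerate: the prefactor $(-\lambda)^{k}$ kills the right-hand side and $P(\mathcal{G}_q(g),0)=0$, so no information about $T(\mathcal{G}_q(g);1,0)$ survives. To get around this I would first cancel the factor $\lambda$ common to $P(\mathcal{G}_q(g),\lambda)$ and to $(-\lambda)^{k}$, obtaining the polynomial identity $(-1)^{n+1}\,P(\mathcal{G}_q(g),\lambda)/\lambda=T(\mathcal{G}_q(g);1-\lambda,0)$ valid for all $\lambda$; evaluating this at $\lambda=0$ is now legitimate and returns $T(\mathcal{G}_q(g);1,0)$ in terms of $\lim_{\lambda\to0}P(\mathcal{G}_q(g),\lambda)/\lambda=(\lambda-1)\prod_{i=2}^{q+1}(\lambda-i)^{E}\big|_{\lambda=0}$, i.e. the linear coefficient of the chromatic polynomial. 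Simplifying through $\prod_{i=2}^{q+1}i=(q+1)!$ gives magnitude $[(q+1)!]^{E}$, which is~\eqref{RAO}.

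The remaining work is purely the sign bookkeeping. The cleanest route is to note that in each case the product over $i=2,\dots,q+1$ evaluated at a negative argument contributes a factor $(-1)^{qE}$, which combines with $(-1)^{n}$; rather than compute the parity of $n(\mathcal{G}_q(g))=M_{g,q}-N_{g,q}+1$ explicitly, I would invoke that $N_{\rm ao}$ and $N_{\rm rao}$ are nonnegative integers, so the unique consistent sign is the positive one recorded in the theorem. Thus the only delicate point is the degenerate $\lambda=0$ evaluation in the root-connected case, resolved by the cancellation-of-$\lambda$ argument above.
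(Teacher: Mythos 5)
Your proposal is correct and is essentially the paper's own proof: the paper likewise feeds the chromatic polynomial of Lemma~\ref{lemb} through Eq.~\eqref{relat} to obtain the polynomial identity $T(\mathcal{G}_q(g);x,0)=x\prod_{i=1}^{q}{(x+i)}^{\frac{2}{q(q+3)}{\left[\frac{(q+1)(q+2)}{2}\right]}^{g+1}-\frac{2}{q(q+3)}}$ (its Eq.~\eqref{eqp}) and then evaluates at $x=2$ and $x=1$. Your cancellation of the common factor $\lambda$ at the degenerate point $\lambda=0$ and your positivity-based sign bookkeeping are simply explicit justifications of steps the paper leaves implicit in passing from Eq.~\eqref{chromatic} to Eq.~\eqref{eqp}.
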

 \begin{proof}
Lemma~\ref{lemb} gives the chromatic polynomial for graph $\mathcal{G}_q(g)$, which together with Eq.~\eqref{relat} governing the Tutte polynomial and the chromatic polynomial yields
 \begin{equation}\label{eqp}
 T(\mathcal{G}_q(g);x,0)=x\prod_{i=1}^{q}{(x+i)}^{\frac{2}{q(q+3)}{\left[\frac{(q+1)(q+2)}{2}\right]}^{g+1}-\frac{2}{q(q+3)}}.
 \end{equation}
By replacing $x=2$ and $x=1$ in Eq.~\eqref{eqp}, we obtain the number of acyclic orientations and
the number of acyclic root-connected orientations of $\mathcal{G}_q(g)$, as given by Eqs.~\eqref{AO} and~\eqref{RAO}, respectively.
 \end{proof}

\section{NUMBER OF PERFECT MATCHINGS}

For  a graph $\mathcal{G}=(\mathcal{V},\mathcal{E})$, a matching $\mathcal{M}$ of $\mathcal{G}$ is a subset of $\mathcal{E}$ such that no two edges in $\mathcal{M}$ share a common node. A node is called matched or covered by a matching, if it is an endpoint of one of the edges in the matching. For a graph $\mathcal{G}$ with even number of nodes, a perfect matching of $\mathcal{G}$  is a matching which matches all nodes in the graph. It was shown that for a complete graph with even $q$ nodes,   the number of its perfect matchings  is $(q-1)!!$~\cite{DiHo98}.

For graph $\mathcal{G}_q(g+1)$ and  $\mathcal{G}_q(g)$, by the second construction approach given in Proposition~\ref{defb}, the number of  their nodes satisfies  $N_{g+1,q}=\frac{(q+1)(q+2)}{2} N_{g,q}-q(q+2)$.  Hence,  $N_{g,q}$ is always even when $q$ is an even number; but  $N_{g,q}$ may be an  odd number when   $q$ is odd. Then,  for odd $q$, a perfect matching may not  exist for $\mathcal{G}_q(g)$. Below, we will show that for an even $q$, perfect matchings always exist in $\mathcal{G}_q(g)$.

\begin{theorem}
When $q$ is even and not less than  2,  perfect matchings  always exist in $\mathcal{G}_q(g)$  for all $g\geq 0$.
\end{theorem}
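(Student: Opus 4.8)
The plan is to prove the statement by induction on $g$, exploiting the single structural fact that a complete graph on an even number of nodes always has a perfect matching. For the base case $g=0$, the graph $\mathcal{G}_q(0)$ is the complete graph $\mathcal{K}_{q+2}$; since $q$ is even, $q+2$ is even as well, so $\mathcal{K}_{q+2}$ admits a perfect matching (indeed $(q+1)!!$ of them, by the count recalled just above the theorem). This settles the base case.

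For the inductive step, I would assume that $\mathcal{G}_q(g)$ possesses a perfect matching $\mathcal{M}_g$ and then construct one for $\mathcal{G}_q(g+1)$. Here the first construction (Definition~\ref{defa}) is the convenient one: $\mathcal{G}_q(g+1)$ is obtained from $\mathcal{G}_q(g)$ by attaching, to each of the $M_{g,q}$ edges $e$ of $\mathcal{G}_q(g)$, a fresh copy $\mathcal{K}_q^{(e)}$ of the complete graph on $q$ new nodes, every node of which is joined to both endpoints of $e$. Consequently the node set of $\mathcal{G}_q(g+1)$ partitions into the old node set $\mathcal{V}(\mathcal{G}_q(g))$ together with the $M_{g,q}$ pairwise-disjoint blocks $\mathcal{K}_q^{(e)}$, each carrying exactly $q$ new nodes, whose internal complete-graph edges are genuinely present by construction.

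The key observation is that, because $q$ is even, each block $\mathcal{K}_q^{(e)}$ is itself an even complete graph and hence has an internal perfect matching $\mathcal{M}_e$ using only edges among its own $q$ nodes. I would then set $\mathcal{M}_{g+1}=\mathcal{M}_g\cup\bigcup_{e\in\mathcal{E}(\mathcal{G}_q(g))}\mathcal{M}_e$. Every old node is covered exactly once by $\mathcal{M}_g$, and every new node is covered exactly once by the internal matching of the unique block containing it; since the blocks are vertex-disjoint from one another and from $\mathcal{V}(\mathcal{G}_q(g))$, the union never reuses a node, so $\mathcal{M}_{g+1}$ is a valid perfect matching of $\mathcal{G}_q(g+1)$, completing the induction. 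There is no serious obstacle in this argument: the whole proof rests on evenness of $q$ making both the seed graph $\mathcal{K}_{q+2}$ and every attached gadget $\mathcal{K}_q$ separately matchable, so that perfect matchings can be assembled blockwise. The only point demanding a moment's care is verifying that the added nodes form vertex-disjoint complete graphs whose internal edges really lie in $\mathcal{G}_q(g+1)$, which is immediate from Definition~\ref{defa}.
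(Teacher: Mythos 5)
Your proposal is correct and follows essentially the same route as the paper's own proof: induction on $g$, with the base case $\mathcal{K}_{q+2}$ and an inductive step in which the $q$ new nodes attached to each edge of $\mathcal{G}_q(g)$ (already covered at both endpoints by $\mathcal{M}_g$) are matched internally by $q/2$ independent edges of their clique, which exists precisely because $q$ is even. The only difference is presentational — you phrase the step as taking a disjoint union of block matchings, while the paper phrases it as choosing $q/2$ independent edges per clique — but these are the same construction.
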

\begin{proof}
By  induction on $g$. For $g=0$, $\mathcal{G}_q(0)$ is a complete graph with $q+2$ nodes. There exist $(q+1)!!$ perfect matchings  in $\mathcal{G}_q(0)$. Thus, the result is true for $g=0$. Suppose that there is a perfect matchings  in $\mathcal{G}_q(g)$.  Let  $\mathcal{M}_g$ be a perfect matching  of $\mathcal{G}_q(g)$ for $g\geq 0$. By construction in Definition~\ref{defa},  $\mathcal{G}_q(g+1)$ is obtained from $\mathcal{G}_q(g)$ by replacing each of  $M_{g,q}$ edges in $\mathcal{G}_q(g)$  by a complete graph having $(q+2)$ nodes, which includes the edge and its  two end nodes. Let $\mathcal{K}_{q+2}^{(1)}$, $\mathcal{K}_{q+2}^{(2)}$, $\ldots$, $\mathcal{K}_{q+2}^{(M_{g,q})}$ denote the $M_{g,q}$ complete graphs. For each of these complete graphs,  corresponding  to an edge in   $\mathcal{G}_q(g)$, since the two end nodes are covered by  $\mathcal{M}_g$, one can chose $q/2$ independent edges in the complete graph to cover the remaining nodes. In this way, one obtains a  perfect matchings  for $\mathcal{G}_q(g+1)$.
\end{proof}

We proceed to determine  the number of perfect matchings in $\mathcal{G}_q(g)$ for even $q$, denoted by $N_{\rm per}(g,q)$. For this purpose, we first  define some intermediary variables for graph $\mathcal{G}_q(g)$.
Let $\Upsilon_0(\mathcal{G}_q(g))$ denote the set of matchings of $\mathcal{G}_q(g)$ such that the two hub nodes $h_1(g)$ and $h_2(g)$ are vacant, while  all the other nodes in $\mathcal{G}_q(g)$ are matched. Let $A_{q}(g)$ be the cardinality of  $\Upsilon_0(\mathcal{G}_q(g))$. Let $\Upsilon_1(\mathcal{G}_q(g))$ be the set of  perfect matchings of $\mathcal{G}_q(g)$, and let $B_{q}(g)=N_{\rm per}(g,q)$ be the cardinality of  $\Upsilon_1(\mathcal{G}_q(g))$. Note that in $\mathcal{G}_q(g)$, there does not exist such a match that either $h_1(g)$ or $h_2(g)$ is vacant, while all other nodes are covered.

\begin{theorem}\label{theo1}
 For even $q\geq 2$ and $g\geq0$, the number of perfect matchings in $\mathcal{G}_q(g)$ is
  \begin{align}\label{Dimer}
&N_{\rm per}(g,q)={[(q+1)!!]}^{\frac{2}{q(q+3)}{\left[\frac{(q+1)(q+2)}{2}\right]}^{g+1}-\frac{2}{q(q+3)}} \times \notag\\
    &\quad{(q+1)}^{-\frac{2(q+2)}{q{(q+3)}^2}{\left[\frac{(q+1)(q+2)}{2}\right]}^{g+1}+\frac{q+2}{q+3}g+\frac{(q+1){(q+2)}^2}{q{(q+3)}^2}}.
  \end{align}
\end{theorem}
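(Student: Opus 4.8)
The plan is to set up a coupled pair of recursions for the two quantities $A_q(g)$ and $B_q(g)$ introduced before the theorem, exploit the self-similar decomposition of Proposition~\ref{defb}, and then solve the resulting system in closed form. Before doing so I would record two preliminary facts. First, since the construction starting from $\mathcal{K}_{q+2}$ treats all $q+2$ initial hub nodes symmetrically, every permutation of the hubs extends to an automorphism of $\mathcal{G}_q(g)$; hence $A_q(g)=|\Upsilon_0(\mathcal{G}_q(g))|$ does not depend on which pair of hubs is required to be vacant, and I may speak of the two interface hubs of any copy interchangeably. Second, because $N_{g,q}$ is even for even $q$, every matching of $\mathcal{G}_q(g)$ leaves an even number of nodes uncovered; consequently, among matchings that cover all nodes except possibly two distinguished hubs, the two hubs are either both covered or both vacant, the mixed case being impossible. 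This is exactly the remark preceding the theorem, and it is the structural key to everything that follows.

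Next I would analyse a perfect matching of $\mathcal{G}_q(g+1)$ through the $\frac{(q+1)(q+2)}{2}$ copies $\mathcal{G}_q^{(i,j)}(g)$ glued along the shared hub nodes. Every node that is not a shared hub lies in a unique copy and must be matched inside it, so the only freedom is how the shared hubs are covered; each shared hub is covered by an edge lying in exactly one of its $q+1$ incident copies. By the parity fact, each copy $\mathcal{G}_q^{(i,j)}(g)$ is forced into one of two pure states: both interface hubs $h_i,h_j$ matched internally (a perfect matching of the copy, counted by $B_q(g)$), or both vacant internally (counted by $A_q(g)$). Identifying the copies with the edges of $\mathcal{K}_{q+2}$ on the hub indices, the requirement that each shared hub be covered exactly once says precisely that the state-$B$ copies form a perfect matching of $\mathcal{K}_{q+2}$. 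Summing over the $(q+1)!!$ such matchings and multiplying the per-copy counts gives
\begin{equation*}
B_q(g+1)=(q+1)!!\,[B_q(g)]^{\frac{q+2}{2}}[A_q(g)]^{\frac{q(q+2)}{2}}.
\end{equation*}
The same analysis with $h_1(g+1),h_2(g+1)$ required vacant forces every copy incident to index $1$ or $2$ into state $A$ and leaves the state-$B$ copies to form a perfect matching of $\mathcal{K}_q$ on the remaining $q$ hub indices, yielding
\begin{equation*}
A_q(g+1)=(q-1)!!\,[A_q(g)]^{\frac{q^2+2q+2}{2}}[B_q(g)]^{\frac{q}{2}},
\end{equation*}
with initial data $B_q(0)=(q+1)!!$ and $A_q(0)=(q-1)!!$ read off from $\mathcal{K}_{q+2}$ and $\mathcal{K}_q$.

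Finally I would linearise by passing to logarithms, turning the product system into an affine recursion with transition matrix $\left(\begin{smallmatrix}\frac{q+2}{2}&\frac{q(q+2)}{2}\\[2pt]\frac{q}{2}&\frac{q^2+2q+2}{2}\end{smallmatrix}\right)$, whose eigenvalues compute to $\lambda_1=\frac{(q+1)(q+2)}{2}$ and $\lambda_2=1$. The dominant eigenvalue produces the factor $\left[\frac{(q+1)(q+2)}{2}\right]^{g+1}$ appearing in both exponents of~\eqref{Dimer}, while the resonance of the constant forcing terms $\ln((q\pm1)!!)$ against the eigenvalue $\lambda_2=1$ produces the term linear in $g$ (the $\frac{q+2}{q+3}g$ contribution). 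Solving the two decoupled scalar recursions and exponentiating to recover $B_q(g)=N_{\rm per}(g,q)$ then gives~\eqref{Dimer}. I expect the principal difficulty to be the combinatorial step establishing the two recursions, in particular justifying that the state-$B$ copies form a perfect matching of the appropriate complete graph and bookkeeping the exponents correctly; once the recursions are in hand, the eigenvalue solution with its resonant linear term is routine though algebraically heavy, and the small case $q=2,\,g=1$ (where the recursion and~\eqref{Dimer} both give $27$) furnishes a useful consistency check.
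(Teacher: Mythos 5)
Your proposal is correct and takes essentially the same approach as the paper: the same intermediary counts $A_q(g)$ and $B_q(g)$, the same parity argument ruling out a single vacant hub, and exactly the recursions \eqref{eqqa}--\eqref{eqqb} with the same initial data, leading to \eqref{Dimer}. The only (cosmetic) difference is in how the coupled recursion is solved: the paper divides \eqref{eqqb} by \eqref{eqqa}, so that the ratio $B_q(g)/A_q(g)$ satisfies a one-step recursion with factor $q+1$ and can be substituted back, which is precisely the eigenvalue-$1$ direction of your log-linearized system.
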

\begin{proof}
Note that $B_{q}(g)=N_{\rm per}(g,q)$. We now derive recursive relations for $A_{q}(g+1)$ and  $B_{q}(g+1)$, on the basis of  which we further determine an exact expression for  $B_{q}(g)$. From Proposition~\ref{defb} and the definitions of   $A_{q}(g)$ and  $B_{q}(g)$, one obtains the following recursion relations for $A_{q}(g)$ and $B_{q}(g)$:
 \begin{equation}\label{eqqa}
  A_{q}(g+1)=(q-1)!!B_{q}^{\frac{q}{2}}(g)A_{q}^{\frac{q^2+2q+2}{2}}(g)\,,
 \end{equation}
 \begin{equation}\label{eqqb}
  B_{q}(g+1)=(q+1)!!B_{q}^{\frac{q}{2}+1}(g)A_{q}^{\frac{q(q+2)}{2}}(g)\,.
 \end{equation}

\textcolor{red}{Equation~\eqref{eqqa} is explained as follows. By Proposition~\ref{defb}, any matching for graph $\mathcal{G}_q(g+1)$ can be constructed by joining the matchings of the $\frac{(q+1)(q+2)}{2}$ copies of $\mathcal{G}_q(g)$ at the $q+2$ hub vertices. According to the aforementioned analysis, for any matching of $\mathcal{G}_q(g+1)$, the matched hub vertices of $\mathcal{G}_q(g+1)$ must come in pairs. Then, any matching  in $\Upsilon_0(\mathcal{G}_q(g+1))$ can be obtained recursively from those in $\Upsilon_0(\mathcal{G}_q(g))$ and $\Upsilon_1(\mathcal{G}_q(g))$ for  the $\frac{(q+1)(q+2)}{2}$ copies of $\mathcal{G}_q(g)$, denoted by $\mathcal{G}_q^{(i)}(g)$, $i=1,2,\ldots, \frac{(q+1)(q+2)}{2}$. Moreover, related quantities in Eq.~\eqref{eqqa} are accounted for as follows: $(q-1)!!$ indicates the ways of pairing the $q$ matched hub vertices, which is equal to the number of perfect matchings of $\mathcal{K}_{q}$; $\frac{q}{2}$ denotes the pairs of matched hub vertices $h_1(g)$ or $h_2(g)$ in $\mathcal{G}_q^{(i)}(g)$; while $\frac{q^2+2q+2}{2}=\frac{(q+1)(q+2)}{2}-\frac{q}{2}$ represents the cases that $h_1(g)$ or $h_2(g)$ are vacant in $\mathcal{G}_q^{(i)}(g)$. Analogously, we can interpret Eq.~\eqref{eqqb}.}

Figure~\ref{match1} gives a graphic illustration of  Eqs.~\eqref{eqqa} and~\eqref{eqqb} for a particular case of $q=2$.

Dividing Eq.~\eqref{eqqb} by Eq.~\eqref{eqqa} yields a recursive relation for $\frac{B_{q}(g)}{A_{q}(g)}$ as
\begin{equation}\label{eqqc}
\frac{B_{q}(g+1)}{A_{q}(g+1)}=(q+1)\frac{B_{q}(g)}{A_{q}(g)}\,.
\end{equation}
With the initial condition $\frac{B_{q}(0)}{A_{q}(0)}=\frac{(q+1)!!}{(q-1)!!}=q+1$, Eq.~\eqref{eqqc} is solved to obtain
\begin{equation}\label{eqqd}
\frac{B_{q}(g)}{A_{q}(g)}={(q+1)}^{g+1},
\end{equation}
which implies
\begin{equation}
A_{q}(g)=\frac{B_{q}(g)}{{(q+1)}^{g+1}}.
\end{equation}
Plugging this expression into Eq.~\eqref{eqqb} leads to
\begin{equation}\label{eqqf}
B_{q}(g+1)=\frac{(q+1)!!}{{(q+1)}^{\frac{q(q+2)(g+1)}{2}}}B_{q}^{\frac{(q+1)(q+2)}{2}}(g)\,.
\end{equation}
Under the initial condition $B_{q}(0)=(q+1)!!$, Eq.~\eqref{eqqf} is solved to obtain Eq.~\eqref{Dimer}.
\end{proof}

\begin{figurehere}
\centerline{
\includegraphics[width=\linewidth]{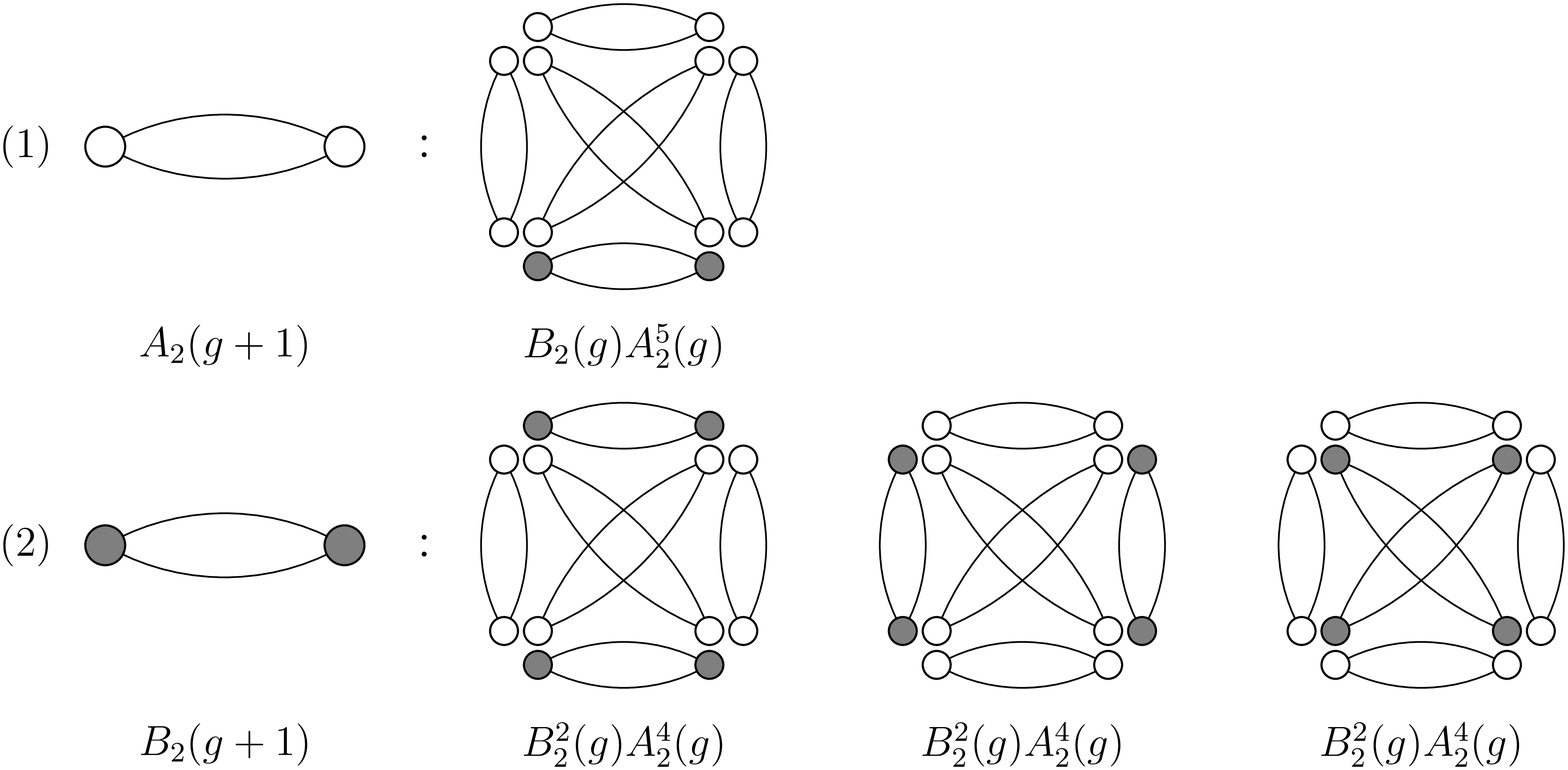}
}
\caption{Illustration of recursive configurations for matchings in $\Upsilon_0(\mathcal{G}_q(g+1))$ and $\Upsilon_1(\mathcal{G}_q(g+1))$ for graph $\mathcal{G}_q(g+1)$ with $q=2$.  Each filled node represents a covered node, while each empty node represents a vacant node. }
\label{match1}
\end{figurehere}

\section{NUMBER OF SPANNING TREES}

For  a graph $\mathcal{G}=(\mathcal{V},\mathcal{E})$ with $|\mathcal{V}|$ nodes, a spanning tree of  $\mathcal{G}$ is a connected subgraph of $\mathcal{G}$ that has a   node set $\mathcal{V}$ and $|\mathcal{V}|-1$ edges. The number of spanning trees in a graph $\mathcal{G}$ is called the complexity of $\mathcal{G}$, which is an important graph invariant~\cite{LiPaYiZh20}.

\begin{lemma}\label{LemNS}
Let $u$ and $v$ denote two distinct nodes in a complete graph $\mathcal{K}_q$  with $q \geq 3$ nodes. Let $N_{u,v}(\mathcal{K}_q) $ be the number of  spanning forests for $\mathcal{K}_q$,  each of which consists two  trees such that   $u$ and $v$ belong to the two different trees. Then, $N_{u,v}(\mathcal{K}_q)=2q^{q-3}$.
\end{lemma}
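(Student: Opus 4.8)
The plan is to count these two-tree spanning forests of $\mathcal{K}_q$ by relating them to labeled forests with a prescribed structure, for which a clean enumeration is available. Recall that the total number of spanning trees of $\mathcal{K}_q$ is $q^{q-2}$ by Cayley's formula. The quantity $N_{u,v}(\mathcal{K}_q)$ counts spanning forests with exactly two components, one containing $u$ and the other containing $v$. Equivalently, I want to count the ways to partition the $q$ vertices into two nonempty blocks with $u$ and $v$ separated, and then choose a spanning tree on each block.

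First I would set up the enumeration by summing over the block containing $u$. Suppose the tree containing $u$ has $k$ vertices; then it must include $u$, exclude $v$, and contain $k-1$ of the remaining $q-2$ vertices. By Cayley's formula there are $k^{k-2}$ labeled trees on $k$ vertices and $(q-k)^{q-k-2}$ labeled trees on the complementary $q-k$ vertices (with the convention that a single vertex contributes $1$). This gives
\begin{equation}\label{eqNuvsum}
N_{u,v}(\mathcal{K}_q)=\sum_{k=1}^{q-1}\binom{q-2}{k-1}k^{k-2}(q-k)^{q-k-2}.
\end{equation}
The task then reduces to evaluating this sum in closed form and showing it equals $2q^{q-3}$.

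The cleaner route, which I would actually pursue, is to avoid evaluating \eqref{eqNuvsum} directly and instead use a known identity for forests. A rooted forest on $q$ labeled vertices with two trees, where the roots are specified to be $u$ and $v$, is counted by a generalization of Cayley's formula: the number of forests of $m$ trees on $q$ vertices rooted at a fixed set of $m$ vertices is $m\,q^{q-m-1}$. Taking $m=2$ with the two roots being $u$ and $v$ gives $2\,q^{q-3}$ rooted two-tree forests in which $u$ and $v$ lie in distinct trees and serve as the roots. The key observation is that for a two-tree \emph{unrooted} forest in which $u$ and $v$ are in different components, there is exactly one way to root it so that $u$ is the root of its tree and $v$ is the root of its tree. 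Hence the rooted count and the unrooted count coincide, giving $N_{u,v}(\mathcal{K}_q)=2q^{q-3}$ immediately.

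The main obstacle is justifying the generalized Cayley formula $m\,q^{q-m-1}$ for forests with a prescribed set of roots, or equivalently evaluating the binomial sum \eqref{eqNuvsum} in closed form. If I want the argument self-contained I would prove the rooted-forest count via a deletion or a Prüfer-type bijection: one standard approach augments the forest with an auxiliary apex vertex adjacent to the $m$ roots, turning an $m$-tree rooted forest on $q$ vertices into a spanning tree on $q+1$ vertices in which the apex has degree $m$, and then counts such trees using the degree-specified Cayley count $\binom{q-1}{m-1}$ together with $(q+1)^{q-1}$-type bookkeeping. Choosing whichever derivation is shortest, the remaining steps are routine, and the small-case check $q=3$ (where $N_{u,v}=2=2\cdot 3^{0}$) confirms the formula.
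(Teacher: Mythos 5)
Your argument is correct, but it takes a genuinely different route from the paper's. The paper observes that deleting the edge $(u,v)$ from a spanning tree containing it, respectively adding that edge to a two-tree forest separating $u$ and $v$, is a bijection between the forests being counted and the spanning trees of $\mathcal{K}_q$ in which $u$ and $v$ are adjacent; it then finds the proportion of spanning trees containing a fixed edge by symmetry (in a uniformly random spanning tree the expected degree of $u$ is $\frac{2(q-1)}{q}$, and each of the other $q-1$ vertices is equally likely to be a neighbor of $u$, so the proportion is $\frac{2}{q}$), giving $q^{q-2}\cdot\frac{2}{q}=2q^{q-3}$. You instead invoke the rooted-forest generalization of Cayley's formula, that the number of forests on $q$ labelled vertices with $m$ trees rooted at a prescribed set of $m$ vertices is $m\,q^{q-m-1}$, at $m=2$, after correctly noting that the rooted and unrooted counts coincide here because the rooting is forced. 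The trade-off is clear: the paper's proof is fully self-contained given Cayley's formula plus an elementary symmetry, whereas yours rests on a stronger classical result that you only sketch; your apex-vertex derivation does go through (trees on $q+1$ vertices in which the apex has degree $m$ number $\binom{q-1}{m-1}q^{q-m}$, and dividing by $\binom{q}{m}$ to fix the apex's neighbor set yields $\frac{m}{q}\,q^{q-m}=m\,q^{q-m-1}$), though the companion factor is $q^{q-m}$ rather than the $(q+1)^{q-1}$-type quantity you allude to, so that step would need to be written out carefully. In exchange, your route generalizes at no extra cost to forests separating any $m$ prescribed vertices, which the paper's edge-deletion bijection does not directly provide; the binomial sum you display is a valid but unnecessary detour, since you never evaluate it.
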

\begin{proof}
Obviously,  $N_{u,v}(\mathcal{K}_q)$ equals  the number of those spanning trees in $\mathcal{K}_q$,  in each of which $u$ and $v$ are adjacent to each. By Cayley's formula~\cite{Ca89}, the number of spanning trees of a $q$-node complete graph  is $q^{q-2}$. For a randomly chosen  spanning tree $\mathcal{T}$ of $\mathcal{K}_q$,  the expected degree of node $u$ is $\frac{2(q-1)}{q}$. Then, in $\mathcal{T}$ the probability that $u$ is a neighbor of $v$ is $\frac{2(q-1)}{q}\cdot\frac{1}{q-1}=\frac{2}{q}$. Therefore, $N_{u,v}(\mathcal{K}_q)=q^{q-2}\cdot \frac{2}{q}=2q^{q-3}$.
\end{proof}


\begin{theorem}\label{Theoremtrees01}
Let $ N_{\rm st}(g,q)$ be the number of spanning trees in graph $\mathcal{G}_{q}(g)$ with $g\geq0$ and $q\geq 1 $. Then,
 \begin{align}
   &N_{\rm st}(g,q)=2^{\frac{2(q+1)}{q{(q+3)}^2}{\left[\frac{(q+1)(q+2)}{2}\right]}^{g+1}-\left(\frac{q+1}{q+3}\right)g-\frac{{(q+1)}^2(q+2)}{{q(q+3)}^2}}\times
     \notag \\
   &\quad {(q+2)}^{\frac{2(q^2+2q-1)}{q{(q+3)}^2}{\left[\frac{(q+1)(q+2)}{2}\right]}^{g+1}+\left(\frac{q+1}{q+3}\right)g+\frac{q^3+2q^2-q+2}{{q(q+3)}^2}}.\notag
  \end{align}
\end{theorem}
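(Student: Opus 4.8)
The plan is to use the self-similar decomposition from Proposition~\ref{defb} to set up a recursion for the complexity $N_{\rm st}(g,q)$, exactly mirroring the strategy used for the perfect matchings and chromatic polynomial. Recall that $\mathcal{G}_q(g+1)$ is the amalgamation of $\frac{(q+1)(q+2)}{2}$ copies of $\mathcal{G}_q(g)$, glued along the $q+2$ hub nodes which themselves form a $\mathcal{K}_{q+2}$. The difficulty compared with chromatic polynomials is that spanning trees do not behave multiplicatively under vertex amalgamation at more than one shared vertex: because the $q+2$ hubs are identified across all copies simultaneously and are mutually adjacent, a global spanning tree restricts to a \emph{spanning forest} in each copy whose components are controlled by how the hub nodes are interconnected. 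So the honest bookkeeping requires tracking, for each copy, not just spanning trees but spanning forests with prescribed partitions of the hub nodes into components.

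First I would introduce auxiliary quantities refining $N_{\rm st}(g,q)$ by the connectivity pattern of the $q+2$ hub nodes of $\mathcal{G}_q(g)$. Concretely, I would define, for each partition $\pi$ of the hub set $\mathcal{V}_{\rm h}(\mathcal{G}_q(g))$, the number $F_\pi(g)$ of spanning forests of $\mathcal{G}_q(g)$ whose components induce exactly the partition $\pi$ on the hubs (every node lies in some tree, and two hubs share a tree iff they are in the same block of $\pi$). Then $N_{\rm st}(g,q)=F_{\hat 1}(g)$ where $\hat 1$ is the one-block partition. By the symmetry of the hub nodes — all $q+2$ hubs are interchangeable by automorphisms of $\mathcal{G}_q(g)$ — the quantity $F_\pi(g)$ depends only on the block-size type of $\pi$, which drastically cuts the number of states.

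Next I would derive the transfer recursion. A spanning tree of $\mathcal{G}_q(g+1)$ is built by choosing a spanning forest in each of the $\frac{(q+1)(q+2)}{2}$ copies, together with a choice of which edges of the $\mathcal{K}_{q+2}$ on the hubs to include, so that the union is a tree on the full hub set and connects everything acyclically. This is precisely a matrix-forest / weighted-Cayley count on $\mathcal{K}_{q+2}$ where each edge $(i,j)$ carries, as weight, the generating contribution of the copy $\mathcal{G}_q^{(i,j)}(g)$ resolved by its hub-connectivity type; Lemma~\ref{LemNS} (the count $2q^{q-3}$ of two-tree spanning forests of $\mathcal{K}_q$ separating two marked vertices) is the combinatorial engine for handling the internal $\mathcal{K}_q$ gadgets on each edge. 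I expect the recursion to collapse to a clean multiplicative form: because the hubs form a complete graph and the copies attach symmetrically, the dominant state ($\hat 1$, all hubs connected) and a single ``separated-pair'' state should close among themselves, giving a ratio recursion analogous to Eq.~\eqref{eqqc} in the matching proof. I would solve that ratio recursion with the explicit base cases at $g=0$ (where $\mathcal{G}_q(0)=\mathcal{K}_{q+2}$, so $N_{\rm st}(0,q)=(q+2)^{q}$ by Cayley) to obtain a first-order relation for $N_{\rm st}(g+1,q)$ in terms of $N_{\rm st}(g,q)^{(q+1)(q+2)/2}$ times explicit powers of $2$ and $q+2$, then telescope it.

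The main obstacle will be controlling the spanning-forest states cleanly rather than drowning in partition bookkeeping: I must verify that only a small, self-closing family of hub-connectivity types actually contributes to the recursion and that their coefficients are exactly the weighted-forest counts predicted by Lemma~\ref{LemNS}, so that the whole system reduces to a single scalar recursion of the form $N_{\rm st}(g+1,q)=C(q)^{g}\,D(q)\,N_{\rm st}(g,q)^{(q+1)(q+2)/2}$. Once that reduction is justified, solving it is routine: taking logarithms turns it into a linear recursion in $\log N_{\rm st}(g,q)$ driven by a geometric term $\left[\frac{(q+1)(q+2)}{2}\right]^{g}$ and a linear-in-$g$ term, whose closed-form solution produces the stated mixture of a $\left[\frac{(q+1)(q+2)}{2}\right]^{g+1}$ exponent, a $g$ term, and a constant in both the base-$2$ and base-$(q+2)$ factors. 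I would finish by checking the formula against $g=0$ to confirm the constant exponents reduce to $N_{\rm st}(0,q)=(q+2)^q$.
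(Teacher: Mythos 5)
Your strategy---recurse on the self-similar decomposition of Proposition~\ref{defb}, in the spirit of the perfect-matching proof---can be made to work, but as written it has a genuine gap, and the gap stems from a misreading of the construction. In Proposition~\ref{defb} the copies are \emph{not} all glued along the same $q+2$ hubs: copy $\mathcal{G}_q^{(i,j)}(g)$ has exactly two of its hubs, $h_i^{(i,j)}(g)$ and $h_j^{(i,j)}(g)$, identified with hubs of $\mathcal{G}_q(g+1)$, while its remaining $q$ hubs stay interior to that copy. (Relatedly, there is no separate $\mathcal{K}_{q+2}$ on the hubs whose edges can be ``included or not'': the edge joining $h_i(g+1)$ to $h_j(g+1)$ is an edge of copy $(i,j)$ itself, so counting it as an independent choice would double count.) Because of this misreading, your plan calls for forests indexed by arbitrary partitions $\pi$ of the hub set, and you explicitly leave unresolved---you call it ``the main obstacle''---why only two states should survive. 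That closure is the missing step, and hub symmetry cannot supply it: symmetry only says $F_\pi$ depends on the shape of $\pi$, not that most $F_\pi$ never enter the recursion. The correct reason is structural: since copy $(i,j)$ meets the rest of $\mathcal{G}_q(g+1)$ in only the two nodes $h_i,h_j$, every component of the restriction of a global spanning tree to that copy must contain $h_i$ or $h_j$ (any other component would be globally disconnected), so the restriction is either a spanning tree of the copy or a two-component spanning forest separating $h_i$ from $h_j$. With that observation your scheme closes on two scalars $S(g)=N_{\rm st}(g,q)$ and $F(g)$; the gluing constraint is that the copies in the ``tree'' state, read as edges of $\mathcal{K}_{q+2}$, form a spanning tree of $\mathcal{K}_{q+2}$, giving $S(g+1)=(q+2)^q\,S(g)^{q+1}F(g)^{q(q+1)/2}$ and $F(g+1)=2(q+2)^{q-1}\,S(g)^{q}F(g)^{(q^2+q+2)/2}$, where $(q+2)^q$ is Cayley's count and $2(q+2)^{q-1}$ is Lemma~\ref{LemNS} applied to $\mathcal{K}_{q+2}$ (not to ``internal $\mathcal{K}_q$ gadgets''). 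The ratio recursion $S(g+1)/F(g+1)=\frac{q+2}{2}\,S(g)/F(g)$, analogous to Eq.~\eqref{eqqc}, gives $S(g)/F(g)=[(q+2)/2]^{g+1}$, and telescoping yields the stated formula (for instance it reproduces $N_{\rm st}(1,1)=54$).

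For comparison, the paper avoids auxiliary states entirely by recursing on Definition~\ref{defa} rather than Proposition~\ref{defb}: passing from $\mathcal{G}_q(g)$ to $\mathcal{G}_q(g+1)$ replaces each edge by a $\mathcal{K}_{q+2}$ whose $q$ new nodes have no neighbors outside it, so a spanning tree of $\mathcal{G}_q(g+1)$ restricts, on the clique of each edge of a contracted spanning tree $\mathcal{T}_g$, to one of $(q+2)^q$ spanning trees, and on the clique of each non-tree edge to one of the $2(q+2)^{q-1}$ separating forests of Lemma~\ref{LemNS}. This produces the single linear recursion $N_{\rm st}(g+1,q)=[(q+2)^q]^{N_{g,q}-1}\,[2(q+2)^{q-1}]^{M_{g,q}-N_{g,q}+1}\,N_{\rm st}(g,q)$, which telescopes directly---no companion quantity $F$, no ratio trick. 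Both routes are valid; the paper's is shorter because its per-edge cliques attach at two nodes by construction, which is exactly the two-attachment-point fact your route still needs to extract from Proposition~\ref{defb}.
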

\begin{proof}
By Definition~\ref{defa}, $\mathcal{G}_{q}(g+1)$ is obtained from  $\mathcal{G}_{q}(g)$  through replacing every edge in $\mathcal{G}_{q}(g)$ by a $\mathcal{K}_{q+2}$ clique,  which contains the edge and its two end nodes.  Accordingly, for any spanning tree $\mathcal{T}_g$ for $\mathcal{G}_{q}(g)$, one can construct a spanning tree  for $\mathcal{G}_{q}(g+1)$ in the following way:  replace each edge in $\mathcal{T}_g$ by a spanning tree of the $\mathcal{K}_{q+2}$ clique and replace each edge $(u,v)$ in $\mathcal{G}_{q}(g)$ but absent in $\mathcal{T}_g$ by a spanning forest for $\mathcal{K}_{q+2}$,  which consists two trees such that $u$ and $v$ are in  different
trees. Thus, using Lemma~\ref{LemNS},  one obtains
\begin{align}\label{trees01}
&\quad  N_{\rm st}(g+1,q) \notag\\&=[(q+2)^q]^{N_{g,q}-1}\,[2(q+2)^{q-1}]^{M_{g,q}-N_{g,q}+1}\, N_{\rm st}(g,q)\notag\\
&= 2^{M_{g,q}-N_{g,q}+1}\,{(q+2)}^{(q-1)M_{g,q}+N_{g,q}-1}\, N_{\rm st}(g,q),
\end{align}
\textcolor{red}{where $N_{g,q}-1$ is the number of edges in the spanning tree $\mathcal{T}_g$ in $\mathcal{G}_q(g)$, and $M_{g,q}-N_{g,q}+1$ is the number of edges which exist in $\mathcal{G}_q(g)$ but is absent in $\mathcal{T}_g$, in other words, the number of edges in $\mathcal{G}_q(g)$ minus the number of edges in $\mathcal{T}_g$.}
Plugging the expressions for  $N_{g,q}$ and $M_{g,q}$ in Eqs.~\eqref{eqn} and~\eqref{eqm} into Eq.~\eqref{trees01} gives \begin{align}
&\quad   N_{\rm st}(g+1,q) \notag\\&=
N_{\rm st}(g,q)2^{\frac{q+1}{q+3}{\left[\frac{(q+1)(q+2)}{2}\right]}^{g+1}-\frac{q+1}{q+3}}\notag\\
    &\quad ¡¡\cdot {(q+2)}^{\frac{q^2+2q-1}{q+3}{\left[\frac{(q+1)(q+2)}{2}\right]}^{g+1}+\frac{q+1}{q+3}} .
    \end{align}
With the initial condition $N_{\rm st}(0,q)= {(q+2)}^q$, one obtains
    \begin{align}
  &N_{\rm st}(g,q) \notag\\=& 2^{\frac{q+1}{q+3}\sum_{i=1}^{g}{\left[\frac{(q+1)(q+2)}{2}\right]}^{i}-\frac{q+1}{q+3}g}\times{(q+2)}^q\times\notag\\
    & {(q+2)}^{\frac{q^2+2q-1}{q+3}\sum_{i=1}^{g}{\left[\frac{(q+1)(q+2)}{2}\right]}^{i}+\frac{q+1}{q+3}g}\notag\\
=&2^{\frac{2(q+1)}{q{(q+3)}^2}{\left[\frac{(q+1)(q+2)}{2}\right]}^{g+1}-\left(\frac{q+1}{q+3}\right)g-\frac{{(q+1)}^2(q+2)}{{q(q+3)}^2}}\times
      \notag \\
     &{(q+2)}^{\frac{2(q^2+2q-1)}{q{(q+3)}^2}{\left[\frac{(q+1)(q+2)}{2}\right]}^{g+1}+\left(\frac{q+1}{q+3}\right)g+\frac{q^3+2q^2-q+2}{{q(q+3)}^2}},\notag
    \end{align}
which completes the proof.
\end{proof}

\textcolor{red}{Note that Theorem~\ref{Theoremtrees01} is consistent with result in~\cite{WaYiXuZh21}, obtained using different technique, and reduces to the result~\cite{ZhLiWuZh10} for the special case of $q=1$.}

\section{CONCLUSIONS}

In this paper, we presented a systematic analytical study of combinatorial properties for a class of iteratively generating simplicial networks, based on their particular construction and self-similar structure. We derived the domination number, the independence number, and the chromatic number. Moreover, we obtained exact expressions for the number of spanning trees, the number of perfect matchings for even $q$, the number of acyclic orientations, and the number of root-connected acyclic orientations.

The considered combinatorial problems are a fundamental research subject of theoretical computer science, many of which are NP-hard and even \#P-complete for a general graph. It is thus of great interest to study the special family of graphs for which these challenging combinatorial problems can be exactly solved. In addition, since the considered combinatorial problems are relevant to various practical application in the aspects of network science and graph data miming, this work provides insight into understanding the applications of these combinatorial problems for simplicial complexes.

\textcolor{red}{Finally, it is worth mentioning that the studied simplicial networks are in fact constructed iteratively by edge corona product, which leads to the self-similarity of the resulting graphs. It is expected that our computation approach and process for relevant problems are also applicable to other graph families~\cite{DoMa05,ZhZhZoChGu07,AgBu09,ZhGuDiChZh09,CoFeRa04,BaCoDaFi09,QiYiZh19} with self-similar properties, built by other graph operations. We note that our techniques also have some limitations. For example, they do not apply to recursive graphs with stochasticity~\cite{HiBe06,ZhZhZoChGu09}.}





%



  \section*{ACKNOWLEDGEMENT}

The work was supported by the Shanghai Municipal Science and Technology Major Project  (Nos.  2018SHZDZX01 and 2021SHZDZX0103), the National Natural Science Foundation of China (Nos. 61872093  and  U20B2051),  Ji Hua Laboratory, Foshan, China (No.X190011TB190),  ZJ Lab, and Shanghai Center for Brain Science and Brain-Inspired Technology. Zixuan Xie was also supported by Fudan's Undergraduate Research Opportunities Program (FDUROP) under Grant No. 22099.






%

\bibliographystyle{ws-ijait}
\bibliography{Combination}

\providecommand{\noopsort}[1]{}\providecommand{\singleletter}[1]{#1}%
\begin{thebibliography}{10}

\bibitem{Ne03}
M.~E. Newman, The structure and function of complex networks, {\em SIAM Rev.}
  {\bf 45}(2)  (2003)  167--256.

\bibitem{Ba16}
A.-L. Barab{\'a}si, {\em Network science} (Cambridge University Press, 2016).

\bibitem{BeGlLe16}
A.~R. Benson, D.~F. Gleich and J.~Leskovec, Higher-order organization of
  complex networks, {\em Science} {\bf 353}(6295)  (2016)  163--166.

\bibitem{GrBaMiAl17}
J.~Grilli, G.~Barab{\'a}s, M.~J. Michalska-Smith and S.~Allesina, Higher-order
  interactions stabilize dynamics in competitive network models, {\em Nature}
  {\bf 548}(7666)  (2017) p. 210.

\bibitem{BeAbScJaKl18}
A.~R. Benson, R.~Abebe, M.~T. Schaub, A.~Jadbabaie and J.~Kleinberg, Simplicial
  closure and higher-order link prediction, {\em Proc. Natl. Acad. Sci.} {\bf
  115}(48)  (2018)  E11221--E11230.

\bibitem{SaCaDaLa18}
V.~Salnikov, D.~Cassese and R.~Lambiotte, Simplicial complexes and complex
  systems, {\em Eur. J. Phys.} {\bf 40}(1)  (2018) p. 014001.

\bibitem{LaPeBaLa19}
I.~Lacopini, G.~Petri, A.~Barrat and V.~Latora, Simplicial models of social
  contagion, {\em Nat. Commun.} {\bf 10}(1)  (2019) p. 2485.

\bibitem{PaPeVa17}
A.~Patania, G.~Petri and F.~Vaccarino, The shape of collaborations, {\em EPJ
  Data Sci.} {\bf 6}(1)  (2017) p.~18.

\bibitem{GiPaCuIt15}
C.~Giusti, E.~Pastalkova, C.~Curto and V.~Itskov, Clique topology reveals
  intrinsic geometric structure in neural correlations, {\em Proc. Natl. Acad.
  Sci. U.S.A.} {\bf 112}(44)  (2015)  13455--13460.

\bibitem{ReNoScect17}
M.~W. Reimann, M.~Nolte, M.~Scolamiero, K.~Turner, R.~Perin, G.~Chindemi,
  P.~D{\l}otko, R.~Levi, K.~Hess and H.~Markram, Cliques of neurons bound into
  cavities provide a missing link between structure and function, {\em Front.
  Comput. Neurosci.} {\bf 11}  (2017) p.~48.

\bibitem{WuOtBa03}
S.~Wuchty, Z.~N. Oltvai and A.-L. Barab{\'a}si, Evolutionary conservation of
  motif constituents in the yeast protein interaction network, {\em Nat.
  Genet.} {\bf 35}(2)  (2003) p. 176.

\bibitem{BaCeLaetc21}
F.~Battiston, G.~Cencetti, I.~Iacopini, V.~Latora, M.~Lucas, A.~Patania, J.-G.
  Young and G.~Petri, Networks beyond pairwise interactions: Structure and
  dynamics, {\em Phys. Rep.} {\bf 907}  (2021)  1--68.

\bibitem{BiZi18}
G.~Bianconi and R.~M. Ziff, Topological percolation on hyperbolic simplicial
  complexes, {\em Phys. Rev. E} {\bf 98}(5)  (2018) p. 052308.

\bibitem{AlBadeMoPeLa21}
U.~Alvarez-Rodriguez, F.~Battiston, G.~F. de~Arruda, Y.~Moreno, M.~Perc and
  V.~Latora, Evolutionary dynamics of higher-order interactions in social
  networks, {\em Nat. Hum. Behav.} {\bf 5}  (2021)  586--595.

\bibitem{SkAr19}
P.~S. Skardal and A.~Arenas, Abrupt desynchronization and extensive
  multistability in globally coupled oscillator simplexes, {\em Phys. Rev.
  Lett.} {\bf 122}(24)  (2019) p. 248301.

\bibitem{GaDiGa21}
L.~Gambuzza, F.~Di~Patti, L.~Gallo, S.~Lepri, M.~Romance, R.~Criado, M.~Frasca,
  V.~Latora and S.~Boccaletti, Stability of synchronization in simplicial
  complexes, {\em Nat. Commun.} {\bf 12}(1)  (2021)  1--13.

\bibitem{MaGoAr20}
J.~T. Matamalas, S.~G{\'o}mez and A.~Arenas, Abrupt phase transition of
  epidemic spreading in simplicial complexes, {\em Phys. Rev. Research} {\bf
  2}(1)  (2020) p. 012049.

\bibitem{CoBi17}
O.~T. Courtney and G.~Bianconi, Weighted growing simplicial complexes, {\em
  Phys. Rev. E} {\bf 95}(6)  (2017) p. 062301.

\bibitem{PeBa18}
G.~Petri and A.~Barrat, Simplicial activity driven model, {\em Phys. Rev.
  Lett.} {\bf 121}(22)  (2018) p. 228301.

\bibitem{KoSeKh21}
K.~Kovalenko, I.~Sendi{\~n}a-Nadal, N.~Khalil, A.~Dainiak, D.~Musatov, A.~M.
  Raigorodskii, K.~Alfaro-Bittner, B.~Barzel and S.~Boccaletti, Growing
  scale-free simplices, {\em Commun. Phys.} {\bf 4}(1)  (2021)  1--9.

\bibitem{Ha02}
A.~Hatcher, {\em Algebraic Topology} (Cambridge University Press, 2002).

\bibitem{HaLa93}
T.~W. Haynes and L.~M. Lawson, Applications of {E}-graphs in network design,
  {\em Networks} {\bf 23}(5)  (1993)  473--479.

\bibitem{HaLa95}
T.~W. Haynes and L.~M. Lawson, Invariants of {E}-graphs, {\em Int. J. Comput.
  Math.} {\bf 55}(1-2)  (1995)  19--27.

\bibitem{WaYiXuZh21}
Y.~Wang, Y.~Yi, W.~Xu and Z.~Zhang, Modeling higher-order interactions in
  complex networks by edge product of graphs, {\em Comput. J.} {\bf 65}(9)
  (2022)  2347--2359.

\bibitem{ZhXuZhKaCh22}
M.~Zhu, W.~Xu, Z.~Zhang, H.~Kan and G.~Chen, Resistance distances in simplicial
  networks, {\em Comput. J. (in press)}   (2022).

\bibitem{DoGoMe02}
S.~N. Dorogovtsev, A.~V. Goltsev and J.~F.~F. Mendes, Pseudofractal scale-free
  web, {\em Phys. Rev. E} {\bf 65}(6)  (2002) p. 066122.

\bibitem{ZhZh21}
X.~Zhou and Z.~Zhang, Edge domination number and the number of minimum edge
  dominating sets in pseudofractal scale-free web and sierpi{\'n}ski gasket,
  {\em Fractals} {\bf 29}(07)  (2021) p. 2150209.

\bibitem{XiYu22}
C.~Xing and H.~Yuan, Lazy random walks on pseudofractal scale-free web with a
  perfect trap, {\em Fractals} {\bf 30}(1)  (2022) p. 2250030.

\bibitem{WaSlYuZh22}
X.~Wang, W.~Slamu, K.~Yu and Y.~Zhu, Maximum matchings in a pseudofractal
  scale-free web, {\em Fractals} {\bf 30}(4)  (2022) p. 2250077.

\bibitem{RoHaBe07}
H.~D. Rozenfeld, S.~Havlin and D.~Ben-Avraham, Fractal and transfractal
  recursive scale-free nets, {\em New J. Phys.} {\bf 9}(6)  (2007) p. 175.

\bibitem{ZhRoZh07}
Z.~Zhang, L.~Rong and S.~Zhou, A general geometric growth model for
  pseudofractal scale-free web, {\em Physica A} {\bf 377}(1)  (2007)  329--339.

\bibitem{ZhQiZhXiGu09}
Z.~Zhang, Y.~Qi, S.~Zhou, W.~Xie and J.~Guan, Exact solution for mean
  first-passage time on a pseudofractal scale-free web, {\em Phys. Rev. E} {\bf
  79}(2)  (2009) p. 021127.

\bibitem{ZhLiWuZh10}
Z.~Zhang, H.~Liu, B.~Wu and S.~Zhou, Enumeration of spanning trees in a
  pseudofractal scale-free web, {\em EPL (Europhys. Lett.)} {\bf 90}(6)  (2010)
  p. 68002.

\bibitem{PeAgZh15}
J.~Peng, E.~Agliari and Z.~Zhang, Exact calculations of first-passage
  properties on the pseudofractal scale-free web, {\em Chaos} {\bf 25}(7)
  (2015) p. 073118.

\bibitem{DiBoBe22}
C.~T. Diggans, E.~M. Bollt and D.~Ben-Avraham, Spanning trees of recursive
  scale-free graphs, {\em Phys. Rev. E} {\bf 105}(2)  (2022) p. 024312.

\bibitem{YiZhPa20}
Y.~Yi, Z.~Zhang and S.~Patterson, Scale-free loopy structure is resistant to
  noise in consensus dynamics in complex networks, {\em IEEE Trans. Cybern.}
  {\bf 50}(1)  (2020)  190--200.

\bibitem{XuWuZhZhKaCh22}
W.~Xu, B.~Wu, Z.~Zhang, Z.~Zhang, H.~Kan and G.~Chen, Coherence scaling of
  noisy second-order scale-free consensus networks, {\em IEEE Trans. Cybern.}
  {\bf 52}(7)  (2022)  5923--5934.

\bibitem{BaAl99}
A.-L. Barab{\'a}si and R.~Albert, Emergence of scaling in random networks, {\em
  Science} {\bf 286}(5439)  (1999)  509--512.

\bibitem{WaSt98}
D.~J. Watts and S.~H. Strogatz, Collective dynamics of `small-world' networks,
  {\em Nature} {\bf 393}(6684)  (1998)  440--442.

\bibitem{ShLiZh17}
L.~Shan, H.~Li and Z.~Zhang, Domination number and minimum dominating sets in
  pseudofractal scale-free web and sierpi{\'n}ski graph, {\em Theoret. Comput.
  Sci.} {\bf 677}  (2017)  12--30.

\bibitem{ShLiZh18}
L.~Shan, H.~Li and Z.~Zhang, Independence number and the number of maximum
  independent sets in pseudofractal scale-free web and sierpi{\'n}ski gasket,
  {\em Theoret. Comput. Sci.} {\bf 720}  (2018)  47--54.

\bibitem{Yu13}
R.~Yuster, Maximum matching in regular and almost regular graphs, {\em
  Algorithmica} {\bf 66}(1)  (2013)  87--92.

\bibitem{HoKlLiLiPoWa15}
W.-K. Hon, T.~Kloks, C.-H. Liu, H.-H. Liu, S.-H. Poon and Y.-L. Wang, On
  maximum independent set of categorical product and ultimate categorical
  ratios of graphs, {\em Theoret. Comput. Sci.} {\bf 588}  (2015)  81--95.

\bibitem{GaHaK15}
M.~Gast, M.~Hauptmann and M.~Karpinski, Inapproximability of dominating set on
  power law graphs, {\em Theoret. Comput. Sci.} {\bf 562}  (2015)  436--452.

\bibitem{CoLeLi15}
J.-F. Couturier, R.~Letourneur and M.~Liedloff, On the number of minimal
  dominating sets on some graph classes, {\em Theoret. Comput. Sci.} {\bf 562}
  (2015)  634--642.

\bibitem{Va79TCS}
L.~Valiant, The complexity of computing the permanent, {\em Theor. Comput.
  Sci.} {\bf 8}(2)  (1979)  189--201.

\bibitem{Va79SiamJComput}
L.~Valiant, The complexity of enumeration and reliability problems, {\em SIAM
  J. Comput.} {\bf 8}(3)  (1979)  410--421.

\bibitem{LoPl86}
L.~Lov{\'a}sz and M.~D. Plummer, {\em Matching Theory}, Annals of Discrete
  Mathematics, Vol.~29 (North Holland, New York, 1986).

\bibitem{NaAk12}
J.~C. Nacher and T.~Akutsu, Dominating scale-free networks with variable
  scaling exponent: heterogeneous networks are not difficult to control, {\em
  New J. Phys.} {\bf 14}(7)  (2012) p. 073005.

\bibitem{LiSlBa11}
Y.-Y. Liu, J.-J. Slotine and A.-L. Barab{\'a}si, Controllability of complex
  networks, {\em Nature} {\bf 473}(7346)  (2011)  167--173.

\bibitem{LiBa16}
Y.~Y. Liu and A.-L. Barab{\'a}si, Control principles of complex systems, {\em
  Rev. Mod. Phys.} {\bf 88}(3)  (2016) p. 035006.

\bibitem{LiLuYaXiWe15}
Y.~Liu, J.~Lu, H.~Yang, X.~Xiao and Z.~Wei, Towards maximum independent sets on
  massive graphs, {\em Proceedings of the 2015 International Conference on Very
  Large Data Base} {\bf 8}(13)  (2015)  2122--2133.

\bibitem{ChLiZh17}
L.~Chang, W.~Li and W.~Zhang, Computing a near-maximum independent set in
  linear time by reducing-peeling, in {\em Proceedings of the 2017 ACM
  International Conference on Management of Data\/} ACM2017, pp. 1181--1196.

\bibitem{HaPrVo99}
J.~Harant, A.~Pruchnewski and M.~Voigt, On dominating sets and independent sets
  of graphs, {\em Comb. Probab. Comput.} {\bf 8}(6)  (1999)  547--553.

\bibitem{WuDuJiLiHu06}
W.~Wu, H.~Du, X.~Jia, Y.~Li and S.~C.-H. Huang, Minimum connected dominating
  sets and maximal independent sets in unit disk graphs, {\em Theoret. Comput.
  Sci.} {\bf 352}(1-3)  (2006)  1--7.

\bibitem{Ge12}
G.~D. Birkhoff, A determinant formula for the number of ways of coloring a map,
  {\em Ann. Math.} {\bf 14}(1/4)  (1912)  42--46.

\bibitem{Bi93}
N.~Biggs, {\em Algebraic Graph Theory, 2nd edn.} (Cambridge University Press,
  1993).

\bibitem{St73}
R.~P. Stanley, Acyclic orientations of graphs, {\em Discrete Math.} {\bf 5}(2)
  (1973)  171--178.

\bibitem{GrZa83}
C.~Greene and T.~Zaslavsky, {On the interpretation of Whitney numbers through
  arrangements of hyperplanes, zonotopes, non-Radon partitions, and
  orientations of graphs}, {\em Trans. Amer. Math. Soc.} {\bf 280}(1)  (1983)
  97--126.

\bibitem{Tu54}
W.~T. Tutte, A contribution to the theory of chromatic polynomials, {\em Can.
  J. Math.} {\bf 6}(1)  (1954)  80--91.

\bibitem{We99}
D.~Welsh, {The Tutte polynomial}, {\em Random Struct. Alg.} {\bf 15}(3-4)
  (1999)  210--228.

\bibitem{DiHo98}
P.~W. Diaconis and S.~P. Holmes, Matchings and phylogenetic trees, {\em Proc.
  Natl. Acad. Sci.} {\bf 95}(25)  (1998)  14600--14602.

\bibitem{LiPaYiZh20}
H.~Li, S.~Patterson, Y.~Yi and Z.~Zhang, Maximizing the number of spanning
  trees in a connected graph, {\em IEEE Trans. Inf. Theory} {\bf 66}(2)  (2020)
   1248--1260.

\bibitem{Ca89}
A.~Cayley, A theorem on trees, {\em Quart. J. Math.} {\bf 23}  (1889)
  376--378.

\bibitem{DoMa05}
J.~P. Doye and C.~P. Massen, Self-similar disk packings as model spatial
  scale-free networks, {\em Phys. Rev. E} {\bf 71}(1)  (2005) p. 016128.

\bibitem{ZhZhZoChGu07}
Z.~Zhang, S.~Zhou, T.~Zou, L.~Chen and J.~Guan, Incompatibility networks as
  models of scale-free small-world graphs, {\em Eur. Phys. J. B} {\bf 60}(2)
  (2007)  259--264.

\bibitem{AgBu09}
E.~Agliari and R.~Burioni, Random walks on deterministic scale-free networks:
  Exact results, {\em Phys. Rev. E} {\bf 80}(3)  (2009) p. 031125.

\bibitem{ZhGuDiChZh09}
Z.~Zhang, J.~Guan, B.~Ding, L.~Chen and S.~Zhou, Contact graphs of disk
  packings as a model of spatial planar networks, {\em New J. Phys.} {\bf
  11}(8)  (2009) p. 083007.

\bibitem{CoFeRa04}
F.~Comellas, G.~Fertin and A.~Raspaud, Recursive graphs with small-world
  scale-free properties, {\em Phys. Rev. E} {\bf 69}(3)  (2004) p. 037104.

\bibitem{BaCoDaFi09}
L.~Barriere, F.~Comellas, C.~Dalf{\'o} and M.~A. Fiol, The hierarchical product
  of graphs, {\em Discrete Appl. Math.} {\bf 157}  (2009)  36--48.

\bibitem{QiYiZh19}
Y.~Qi, Y.~Yi and Z.~Zhang, Topological and spectral properties of small-world
  hierarchical graphs, {\em Comput. J.} {\bf 62}(5)  (2019)  769--784.

\bibitem{HiBe06}
M.~Hinczewski and A.~N. Berker, Inverted berezinskii-kosterlitz-thouless
  singularity and high-temperature algebraic order in an ising model on a
  scale-free hierarchical-lattice small-world network, {\em Phys. Rev. E} {\bf
  73}(6)  (2006) p. 066126.

\bibitem{ZhZhZoChGu09}
Z.~Zhang, S.~Zhou, T.~Zou, L.~Chen and J.~Guan, Different thresholds of bond
  percolation in scale-free networks with identical degree sequence, {\em Phys.
  Rev. E} {\bf 79}(3)  (2009) p. 031110.

\end{thebibliography}

\end{multicols}
\end{document}